\newtheorem{theorem}{\bf Theorem}[section]
\newtheorem{lemma}[theorem]{\bf Lemma}
\newtheorem{proposition}[theorem]{\bf Proposition}
\newtheorem{corollary}[theorem]{\bf Corollary}
\author[C. Acciarri]{Cristina Acciarri}
\address{Department of Mathematics, University of Brasilia, 70910-900 Bras\'ilia DF, Brazil}
\email{acciarricristina@yahoo.it}
\author[P. Shumyatsky]{Pavel Shumyatsky}
\address{Department of Mathematics, University of Brasilia, 70910-900 Bras\'ilia DF, Brazil}
\email{pavel@unb.br}
\author[D. San\c c\~ao da Silveira ]{Danilo San\c c\~ao da Silveira}
\address{Department of Mathematics, Federal University of Goi\'as, 75704-020 Catal\~ao GO, Brazil}
\email{sancaodanilo@ufg.br}
\keywords{Profinite groups, Automorphisms, Centralizers, Engel elements}
\subjclass[2010]{20D45, 20E18, 20F40, 20F45}
\thanks{This work was supported by the Conselho Nacional de Desenvolvimento Cient\'{\i}fico e Tecnol\'ogico (CNPq),  and Funda\c c\~ao de Apoio \`a Pesquisa do Distrito Federal (FAPDF), Brazil.}
\title[On groups with automorphisms]{On groups with automorphisms\\ whose fixed points are Engel}
\begin{document}

\begin{abstract} We complete the study of finite and profinite groups admitting an action by an elementary abelian group under which the centralizers of automorphisms consist of Engel elements. In particular, we prove the following theorems. 

Let $q$ be a prime and $A$ an elementary abelian $q$-group of order at least $q^2$ acting coprimely on a profinite group $G$. Assume that all elements in $C_{G}(a)$ are Engel in $G$ for each $a\in A^{\#}$. Then $G$ is locally nilpotent (Theorem B2).

Let $q$ be a prime, $n$ a positive integer and $A$ an elementary abelian group of order $q^3$ acting coprimely on a finite group $G$. Assume that for each $a\in A^{\#}$ every element of $C_{G}(a)$ is $n$-Engel in $C_{G}(a)$. Then the group $G$ is $k$-Engel for some $\{n,q\}$-bounded number $k$ (Theorem A3).
\end{abstract}

\maketitle
\section{Introduction}
Let $A$ be a  finite group acting on a finite group $G$. Many well-known results show that the structure of the centralizer $C_G(A)$ (the fixed-point subgroup) of $A$ has influence over the structure of $G$. The influence is especially strong if $(|A|,|G|)=1$, that is, the action of $A$ on $G$ is coprime. Following the solution of the restricted Burnside problem it was discovered that the exponent of $C_G(A)$ may have strong impact over the exponent of $G$. Recall that a group $G$ is said to have exponent $n$ if $x^n=1$ for every $x\in G$ and $n$ is the minimal positive integer with this property. The next result was obtained in \cite{KS}.
\medskip

{\bf Theorem\  A1.} {\it Let $q$ be a prime, $n$ a positive integer and $A$ an elementary abelian group of order $q^2$. Suppose that $A$ acts coprimely on a finite group $G$ and assume that $C_{G}(a)$ has exponent dividing $n$ for each $a\in A^{\#}$. Then the exponent of $G$ is $\{n,q\}$-bounded.}
\medskip
  
Here and throughout the paper $A^{\#}$ denotes the set of nontrivial elements of $A$. We use the expression ``$\{a,b,\dots\}$-bounded'' to abbreviate ``bounded from above in terms of  $a,b,\dots$ only''. The proof of  the above result involves a number of deep ideas. In particular, Lie-theoretical results of Zelmanov obtained in his solution of the restricted Burnside problem \cite{Z0,Z1} are combined with the Lubotzky--Mann theory of powerful $p$-groups \cite{luma}, Lazard's criterion for a pro-$p$ group to be $p$-adic analytic \cite{L}, and a theorem of Bahturin and Zaicev on Lie algebras admitting a group of automorphisms whose fixed-point subalgebra is PI \cite{BZ}.

A profinite (non-quantitative) version of the above theorem was obtained in \cite{eu}. By an automorphism of a profinite group we always mean a continuous automorphism. A group $A$ of automorphisms of a profinite group $G$ is coprime if $A$ is finite while $G$ is an inverse limit of finite groups whose orders are relatively prime to the order of $A$. A group is said to be locally finite if each of its finitely generated subgroups is finite. A group $G$ is torsion if each element in $G$ has finite order.  Using a reduction theorem of Wilson \cite{W2}, Zelmanov showed that a profinite group is locally finite if and only if it is torsion \cite{zelmanov}. The profinite (non-quantitative) version of Theorem A1 is as follows.
\medskip

{\bf Theorem\  B1.} {\it Let $q$ be a prime and $A$ an elementary abelian $q$-group of order at least $q^2$. Suppose that $A$ acts coprimely on a profinite group $G$ and assume that $C_{G}(a)$ is torsion for each $a\in A^{\#}$. Then $G$ is locally finite.}
\medskip

In the recent work \cite{shusa} the situation where the centralizers $C_{G}(a)$ consist of Engel elements was dealt with. If  $x,y$ are elements of a (possibly infinite) group $G$, the commutators $[x,_n y]$ are defined inductively by the rule
$$[x,_0 y]=x,\quad [x,_n y]=[[x,_{n-1} y],y]\quad \text{for all}\, n\geq 1.$$
An element $x$ is called a (left) Engel element if for any $g\in G$ there exists $n$, depending on $x$ and $g$, such that $[g,_n x]=1$.  A group $G$ is called Engel if all elements of $G$ are Engel. The element $x$ is called a (left) $n$-Engel element if for any $g\in G$ we have $[g,_n x]=1$. The group $G$ is $n$-Engel if all elements of $G$ are $n$-Engel. The following result was proved in \cite{shusa}.
\medskip

{\bf Theorem\  A2.} {\it Let $q$ be a prime, $n$ a positive integer and $A$ an elementary abelian group of order $q^2$. Suppose that $A$ acts coprimely on a finite group $G$ and assume that for each $a\in A^{\#}$ every element of $C_{G}(a)$ is $n$-Engel in $G$. Then the group $G$ is $k$-Engel for some $\{n,q\}$-bounded number $k$.}
\medskip

One of the goals of the present article is to establish a profinite (non-quantitative) version of Theorem A2. A group is said to be locally nilpotent if each of its finitely generated subgroups is nilpotent. An important theorem of Wilson and Zelmanov \cite[Theorem 5]{WZ}  tells us that a profinite group is locally nilpotent if and only if it is Engel.
\medskip

{\bf Theorem\  B2.} {\it Let $q$ be a prime and $A$ an elementary abelian $q$-group of order at least $q^2$. Suppose that $A$ acts coprimely on a profinite group $G$ and assume that all elements in $C_{G}(a)$ are Engel in $G$ for each $a\in A^{\#}$. Then $G$ is locally nilpotent.}
\medskip

If, in Theorem A2, we relax the hypothesis that every element of $C_{G}(a)$ is $n$-Engel in $G$ and require instead that every element of $C_{G}(a)$ is $n$-Engel in $C_{G}(a)$, we quickly see that the result is no longer true. An example of a finite non-nilpotent group $G$ admitting a four-group of automorphisms $A$ such that $C_G(a)$ is abelian for each $a\in A^{\#}$ can be found for instance in \cite{PS-CA3}. On the other hand, in the present article we establish the following theorem.
\medskip

{\bf Theorem\  A3.} {\it Let $q$ be a prime, $n$ a positive integer and $A$ an elementary abelian group of order $q^3$. Suppose that $A$ acts coprimely on a finite group $G$ and assume that for each $a\in A^{\#}$ every element of $C_{G}(a)$ is $n$-Engel in $C_{G}(a)$. Then the group $G$ is $k$-Engel for some $\{n,q\}$-bounded number $k$.}
\medskip

A profinite (non-quantitative) version of the above theorem already exists in the literature. It was obtained in \cite{PS-CA3}:
\medskip

{\bf Theorem\  B3.} {\it Let $q$ be a prime and $A$ an elementary abelian $q$-group of order at least $q^3$. Suppose that $A$ acts coprimely on a profinite group $G$ and assume that $C_{G}(a)$ is locally nilpotent for each $a\in A^{\#}$. Then $G$ is locally nilpotent.}
\medskip

Thus, the purpose of the present article is to provide the proofs for Theorems B2 and A3. As the reader will see, the proofs have rather a lot in common. In the same time, by obvious reasons, the differences in the proofs are rather significant. 

Throughout the paper we use, without special references, the well-known properties of coprime actions (see for example \cite[Lemma 3.2]{PSprofinite}).

If $\alpha$ is a coprime automorphism of a profinite group $G$, then $C_{G/N}(\alpha)=C_G(\alpha)N/N$ for any $\alpha$-invariant normal subgroup $N$.
 
If $A$ is a noncyclic abelian group acting coprimely on a profinite group $G$, then $G$ is generated by the subgroups $C_G(B)$, where $A/B$ is cyclic.

        \section{About Lie algebras and Lie rings}
Let $X$ be a subset of a Lie algebra $L$. By a commutator in elements of $X$ we mean any element of $L$ that can be obtained as a Lie product of elements of $X$ with some system of brackets. If $x_1,\ldots,x_k,x, y$ are elements of $L$, we define inductively 
$$[x_1]=x_1; [x_1,\ldots,x_k]=[[x_1,\ldots,x_{k-1}],x_k]$$
and 
$[x,_0y]=x; [x,_my]=[[x,_{m-1}y],y],$ for all positive integers $k,m$.  
As usual, we say that an element $a\in L$ is ad-nilpotent if there exists a positive integer $n$ such that $[x,_na]=0$ for all $x\in L$. If $n$ is the least integer with the above property, then we say that $a$ is ad-nilpotent of index $n$. 

The next theorem represents the most general form of the Lie-theoretical part of the solution of the restricted Burnside problem. It was announced by Zelmanov in \cite{Z1}. A detailed proof was recently published in \cite{zenew}.

\begin{theorem}\label{Z1992}
Let $L$ be a Lie algebra over a field and suppose that $L$ satisfies a polynomial identity. If $L$ can be generated by a finite set $X$ such that every commutator in elements of $X$ is ad-nilpotent, then $L$ is nilpotent.
\end{theorem}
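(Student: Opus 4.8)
The plan is to follow Zelmanov's strategy of playing the polynomial identity off against the ad-nilpotency of the generators in order to manufacture \emph{sandwich} elements. Recall that a nonzero element $c\in L$ is a sandwich if $[[x,c],c]=0$ and $[[x,c],[y,c]]=0$ for all $x,y\in L$; equivalently $(\mathrm{ad}\,c)^2=0$ and $(\mathrm{ad}\,c)(\mathrm{ad}\,x)(\mathrm{ad}\,c)=0$. The cornerstone is the Kostrikin--Zelmanov theorem: a Lie algebra generated by finitely many sandwiches is nilpotent, of class bounded in terms of the number of generators. Granting this, the entire difficulty of Theorem~\ref{Z1992} is concentrated in showing that the hypotheses---a nontrivial polynomial identity together with ad-nilpotency of all commutators in the finite generating set $X$---force the presence of enough sandwiches.

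First I would reduce to a convenient universal object. Since nilpotency of $L$ is equivalent to the vanishing of all sufficiently long commutators in $X$, and since each such commutator is a consequence of finitely many of the defining relations, one may replace $L$ by the relatively free Lie algebra on $X$ subject to the given identity and the ad-nilpotency relations, and then extend scalars to an algebraically closed field (which preserves both the PI condition and ad-nilpotency, and neither creates nor destroys nilpotency). The goal thereby becomes: prove that this universal algebra is nilpotent, after which Theorem~\ref{Z1992} follows by specialization.

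The core step is the construction of sandwiches. Here I would multilinearize the polynomial identity and combine it with Shirshov-type combinatorics: because $L$ is PI, a height argument restricts the essential shapes of long products and reduces matters to controlling commutators of bounded pattern in the ad-nilpotent generators. The key lemma to aim for is that a sufficiently long commutator $[x_{i_1},\ldots,x_{i_m}]$ in the generators---or a suitable linear combination of such---is a sandwich: ad-nilpotency of the individual generators supplies square-zero behaviour of the adjoint after enough iterations, while the polynomial identity collapses precisely the products that would otherwise obstruct $(\mathrm{ad}\,c)(\mathrm{ad}\,x)(\mathrm{ad}\,c)=0$. One then feeds the resulting sandwiches into the Kostrikin--Zelmanov theorem, so that the ideal they generate is nilpotent; passing to the quotient, one checks that the hypotheses are inherited and closes the argument by induction on an appropriately chosen well-ordered invariant, ensuring the process terminates with global nilpotency of $L$.

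The main obstacle is exactly this manufacturing of sandwiches in full generality. Ad-nilpotency alone gives square-zero behaviour only for single generators, whereas the sandwich condition is a two-variable, ideal-theoretic constraint; bridging this gap for an arbitrary polynomial identity over an arbitrary field is the deep technical heart of the matter and requires Zelmanov's complete theory of sandwich algebras and their nilpotency. Accordingly, rather than reconstruct this machinery, I would invoke it as a black box, precisely as the detailed account in \cite{zenew} establishes and as the ambient arguments of the present paper rely upon.
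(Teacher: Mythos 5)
This statement is Zelmanov's theorem; the paper offers no proof of it, merely citing \cite{Z1} for the announcement and \cite{zenew} for the detailed argument. Your outline of the sandwich-algebra strategy (reduction to a relatively free algebra, construction of sandwiches from the PI together with ad-nilpotency, and the Kostrikin--Zelmanov nilpotency theorem for sandwich-generated algebras) is an accurate description of how the result is actually established, and since you ultimately invoke \cite{zenew} as a black box for the genuinely hard step, your treatment is consistent with what the paper itself does.
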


The next theorem, which was proved by Bahturin and Zaicev for soluble groups $A$ \cite{BZ} and later extended by Linchenko to the general case \cite{l}, provides an important criterion for a Lie algebra to satisfy a polynomial identity. 
 
\begin{theorem}\label{blz}
Let $L$ be a Lie algebra over a field $K$. Assume that a finite group $A$ acts on $L$ by automorphisms in such a manner that $C_L(A)$ satisfies a polynomial identity. Assume further that the characteristic of $K$ is either $0$ or prime to the order of $A$. Then $L$ satisfies a polynomial identity.
\end{theorem}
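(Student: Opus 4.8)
The plan is to use that being PI is inherited in both directions under scalar extension and, for abelian $A$, under passage to the grading determined by the eigenspaces of the action, and then to bootstrap from $C_L(A)$ to all of $L$. First I would reduce to an algebraically closed base field. Put $\bar L=L\otimes_K\bar K$; the group $A$ acts on $\bar L$, and since $\operatorname{char}K$ is $0$ or prime to $|A|$ the averaging idempotent $e=\frac1{|A|}\sum_{a\in A}a$ is an $A$-equivariant projection of $L$ onto $C_L(A)$, so taking invariants commutes with the base change and $C_{\bar L}(A)=C_L(A)\otimes_K\bar K$. Any identity of $C_L(A)$ is then an identity of $C_{\bar L}(A)$, while conversely any identity of $\bar L$ restricts to $L$; hence it suffices to prove $\bar L$ is PI, and I may assume $K=\bar K$. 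In particular $K$ is infinite, so $C_L(A)$ may be taken to satisfy a multilinear identity.

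With $K$ algebraically closed and $|A|$ invertible, the abelian case becomes a grading problem: $A$ acts diagonalizably, so $L=\bigoplus_{\chi\in\hat A}L_\chi$ with $L_\chi=\{x\in L:a(x)=\chi(a)x\ \text{for all}\ a\in A\}$ and $\hat A=\operatorname{Hom}(A,K^\times)$ finite of order $N=|A|$. Because $A$ acts by automorphisms, $[L_\chi,L_\psi]\subseteq L_{\chi\psi}$, so $L$ is graded by the finite group $\hat A$ with neutral component $L_1=C_L(A)$. The decisive step is the transfer principle: \textbf{a Lie algebra graded by a finite group of order $N$ whose neutral component is PI is itself PI}, with identity-degree bounded in terms of $N$ and the degree of the neutral identity. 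The mechanism is a pigeonhole argument on degrees: in a sufficiently long product of homogeneous elements the partial products of their degrees must repeat, forcing a consecutive portion of neutral degree, which therefore lies in $C_L(A)$; invoking the multilinear identity of $C_L(A)$ on such portions lets one manufacture a nonzero multilinear identity of $L$.

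For solvable $A$ I would then induct on the derived length, the abelian case just treated being the base step. If $A$ is not abelian, set $B=[A,A]$; then $A/B$ is abelian and acts on the $A$-invariant subalgebra $C_L(B)$ with $C_{C_L(B)}(A/B)=C_L(A)$, so the abelian case gives that $C_L(B)$ is PI, and since $B$ has smaller derived length the inductive hypothesis applied to $B$ acting on $L$ shows that $L$ is PI.

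The main obstacle is the passage to arbitrary (non-solvable) finite $A$, where the derived series no longer reduces us to the abelian situation and there is no honest group grading of $L$; this is precisely the content of Linchenko's extension. The natural route is to regard $L$ as a module Lie algebra over the group Hopf algebra $H=KA$, which is semisimple by Maschke's theorem, to replace the eigenspace grading by the comodule structure over the dual $H^\ast$, and to replace the combinatorial pigeonhole by a Hopf-algebraic analysis of products inside the smash product $L\# H$. Making this grading-free combinatorics produce a genuine identity of $L$ from one of $L^{H}=C_L(A)$ — with no group to index the homogeneous components — is the heart of the difficulty and the reason the non-solvable case requires separate treatment.
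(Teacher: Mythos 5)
The paper itself offers no proof of Theorem \ref{blz}: it is quoted from Bahturin--Zaicev \cite{BZ} (soluble $A$) and Linchenko \cite{l} (the general case), so your proposal can only be measured against that literature. Your overall architecture --- reduction to an algebraically closed field via the averaging idempotent $\frac{1}{|A|}\sum_{a\in A}a$, the eigenspace grading by $\hat A$ with neutral component $C_L(A)$ in the abelian case, and the induction on derived length using $C_{C_L(B)}(A/B)=C_L(A)$ for $B=[A,A]$ --- is exactly the Bahturin--Zaicev strategy for soluble $A$, and those reductions are correct (modulo the small point that only multilinear identities are guaranteed to survive scalar extension, which you repair by multilinearizing first).

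There are, however, two genuine gaps. First, your ``transfer principle'' for group-graded Lie algebras is itself the main theorem of \cite{BZ}, and the pigeonhole mechanism you sketch for it is the \emph{associative} argument; it does not go through for Lie algebras as stated. In an associative product $a_1\cdots a_m$ of homogeneous elements, a repeated value among the partial degrees $g_1\cdots g_k$ yields consecutive blocks that are honest elements of the neutral component, into which the multilinear identity can be substituted, because the product literally factors through those blocks. In a Lie algebra the relevant object is a left-normed commutator $[y,x_1,\ldots,x_m]$, and a consecutive block $x_{j+1},\ldots,x_k$ of neutral total degree is \emph{not} thereby an element of $L_e$ acting on the rest: one must first perform Jacobi rewriting (using $\mathrm{ad}\,[u,v]=[\mathrm{ad}\,u,\mathrm{ad}\,v]$ and the like) to collect such blocks into genuine subcommutators, and controlling the error terms this generates is the technical heart of the graded theorem. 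As written, the decisive step of your abelian case is asserted rather than proved. Second, as you acknowledge, the non-soluble case is only gestured at via Linchenko's Hopf-algebraic approach and is not proved; since the theorem is stated for arbitrary finite $A$, the proposal is incomplete. It is worth noting, though, that every application in this paper takes $A$ elementary abelian, so the soluble case --- once the graded transfer principle is actually established --- would suffice for Theorems A3 and B2.
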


Both Theorems \ref{Z1992} and  \ref{blz} admit  the following respective quantitative versions (see for example \cite{aaaa}).

\begin{theorem}\label{Z1}
Let $L$ be a Lie algebra over a field $K$ generated by $a_1,\ldots,a_m$. Suppose that $L$ satisfies a polynomial identity $f\equiv 0$ and each commutator in $a_1,\ldots,a_m$ is ad-nilpotent of index at most $n$. Then $L$ is nilpotent of $\{f,K,m,n\}$-bounded class.
\end{theorem}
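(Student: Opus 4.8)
The plan is to deduce this quantitative statement from the qualitative Theorem~\ref{Z1992} by a standard universality argument: since $f$, $K$, $m$ and $n$ are all fixed, it suffices to exhibit a single ``freest'' Lie algebra satisfying the hypotheses and to observe that its nilpotency class bounds the class of every algebra covered by the theorem.

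Concretely, first I would take the free Lie algebra $F$ over $K$ on free generators $x_1,\dots,x_m$, and let $T\subseteq F$ be the $T$-ideal (the fully invariant ideal) generated by $f$, so that $F/T$ is the relatively free algebra of the variety defined by $f\equiv 0$. Next I would enlarge $T$ to the ideal $J$ of $F$ generated by $T$ together with all elements $[u,_n w]$, where $u$ runs over $F$ and $w$ runs over the countably many commutators in $x_1,\dots,x_m$; set $U=F/J$ and denote by $\bar x_i$ the image of $x_i$. Then $U$ meets every hypothesis of Theorem~\ref{Z1992}: it is generated by $\bar x_1,\dots,\bar x_m$; it satisfies $f\equiv 0$ because $J\supseteq T$; and every commutator $\bar w$ in the generators is ad-nilpotent of index at most $n$, since each element of $U$ is of the form $\bar u$ with $u\in F$ and $[\bar u,_n\bar w]=\overline{[u,_n w]}=0$ by the definition of $J$. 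Hence Theorem~\ref{Z1992} applies and gives that $U$ is nilpotent of some finite class $c$; as $U$ is determined by $f$, $K$, $m$ and $n$ alone, so is $c$.

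To finish I would transfer this bound to an arbitrary $L$ as in the statement. The map $x_i\mapsto a_i$ extends to a surjective homomorphism $F\to L$; it annihilates $T$ because $L$ satisfies $f\equiv 0$, and it annihilates each generator $[u,_n w]$ of $J$ because the image of $w$ is a commutator in $a_1,\dots,a_m$ and is therefore ad-nilpotent of index at most $n$ in $L$. Consequently the map factors through a surjection $U\to L$, and $L$, being a homomorphic image of an algebra nilpotent of class $c$, is itself nilpotent of class at most $c$. This yields the desired $\{f,K,m,n\}$-bounded class.

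The only delicate point is the verification that the single universal object $U$ genuinely inherits the ad-nilpotency hypothesis, i.e.\ that imposing $[u,_n w]=0$ for all $u\in F$ forces every commutator in the generators of $U$ to be ad-nilpotent \emph{of index at most $n$ on the whole of $U$} rather than on a proper subalgebra; this is exactly where the surjectivity of $F\to U$ is used. An equivalent route avoids constructing $U$ and argues by contradiction: for fixed $f,K,m,n$ one takes a sequence of algebras $L_i$ satisfying the hypotheses with nilpotency classes tending to infinity and passes to an ultraproduct. The identity $f\equiv 0$ and, for each fixed commutator word, ad-nilpotency of index at most $n$ are universal first-order conditions, hence preserved under ultraproducts by \L o\'s's theorem; applying Theorem~\ref{Z1992} to the subalgebra generated by the images of the generators forces it to be nilpotent of some class $c$, and transferring the vanishing of all weight-$(c+1)$ commutators in the generators back to the $L_i$ contradicts the unboundedness of their classes.
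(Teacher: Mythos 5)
Your argument is correct. Note that the paper does not actually prove Theorem~\ref{Z1}: it is quoted with a reference to the survey \cite{aaaa}, where the quantitative statement is obtained from the qualitative Theorem~\ref{Z1992} by exactly the kind of reduction you describe — passing to the relatively free (universal) object in which the identity $f\equiv 0$ and the relations $[u,{}_n\,w]=0$ are imposed, and pulling the resulting nilpotency class back along the canonical surjection onto an arbitrary $L$ satisfying the hypotheses. Both of your routes (the universal quotient $U=F/J$ and the ultraproduct/compactness variant) are sound, and your identification of the one delicate point — that surjectivity of $F\to U$ is what makes the generators of $U$ ad-nilpotent on all of $U$ — is exactly right.
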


\begin{theorem}\label{bZ1}
 Let $L$ be as in Theorem \ref{blz} and assume that $C_L(A)$ satisfies a polynomial identity $f\equiv 0$. Then $L$ satisfies a polynomial identity of $\{|A|,f,K\}$-bounded degree.
\end{theorem}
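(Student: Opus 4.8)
The plan is to derive the quantitative bound by running the Bahturin--Zaicev--Linchenko argument (which proves the qualitative Theorem \ref{blz}) while keeping explicit track of all degrees, the only genuinely new ingredient being a quantitative graded-PI lemma. Throughout, the key structural fact to exploit is that, since the characteristic of $K$ is either $0$ or prime to $|A|$, the group algebra $KA$ is semisimple, so $L$ decomposes as an $A$-module in a controlled way.

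\textbf{Step 1 (reduction to an algebraically closed field).} First I would pass to the algebraic closure, setting $\bar L=L\otimes_K\bar K$ with the $A$-action extended $\bar K$-linearly. Then $C_{\bar L}(A)=C_L(A)\otimes_K\bar K$. After replacing $f$ by its complete linearization I may assume $f$ is multilinear of degree bounded in terms of $\deg f$; a multilinear identity of $C_L(A)$ is inherited by the scalar extension $C_{\bar L}(A)$, because evaluating a multilinear polynomial on $\bar K$-combinations of elements of $C_L(A)$ yields a $\bar K$-combination of its values on $C_L(A)$. Conversely any identity satisfied by $\bar L$ restricts, via the embedding $x\mapsto x\otimes 1$, to the same identity on $L$. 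Hence it suffices to bound the PI-degree of $\bar L$, and I may assume $K$ is algebraically closed; in particular $K$ is a splitting field for every subgroup of $A$.

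\textbf{Step 2 (from the action to a grading).} Over a splitting field with characteristic not dividing $|A|$, when $A$ is abelian every irreducible $KA$-module is one-dimensional, so $L$ splits into weight spaces $L=\bigoplus_{\chi\in\hat A}L_\chi$ satisfying $[L_\chi,L_\psi]\subseteq L_{\chi\psi}$. This is a grading by the finite group $\hat A$ of order $|A|$, and its identity component is exactly $C_L(A)$. For a general finite group $A$ I would reduce to the abelian case by induction along a normal series of $A$: a normal subgroup $B$ of $A$ yields the weight decomposition controlling $C_L(B)$, while $A/B$ acts on the resulting pieces, and each stage inflates the degree bound by a factor controlled by $|A|$. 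Here Bahturin--Zaicev handle soluble $A$ and Linchenko's refinement is what covers arbitrary finite $A$. This reduces everything to a purely graded statement.

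\textbf{Step 3 (the quantitative graded-PI lemma --- the main obstacle).} The heart of the matter, and the step I expect to be hardest, is the following: if a Lie algebra $M=\bigoplus_{g\in G}M_g$ is graded by a finite group $G$ of order $m$ and the identity component $M_1$ satisfies a multilinear PI of degree $d$, then $M$ satisfies a PI of $\{m,d\}$-bounded degree. The strategy is combinatorial. I would evaluate a long, suitably bracketed product on homogeneous elements and track the $G$-degrees of its initial sub-brackets; once the length is large enough in terms of $m$ and $d$, a Ramsey/pigeonhole argument produces enough sub-brackets lying in the identity component $M_1$, arranged so that the identity $f$ of $M_1$ can be applied to force a nontrivial identity on all of $M$. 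The delicate point, absent in the associative setting, is that homogeneous components are not closed under arbitrary rearrangement, so the Jacobi identity must be used to reorganize brackets while respecting the grading; this is where the bookkeeping is most demanding. Assembling Steps 1--3 produces a PI for $\bar L$, and hence for $L$, of degree bounded solely in terms of $|A|$, $f$, and $K$, with the dependence on $K$ entering only through its characteristic via the semisimplicity of $KA$. This is precisely the assertion of the theorem.
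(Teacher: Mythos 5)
First, a point of reference: the paper does not prove this statement at all --- it is quoted from the literature with the remark ``see for example \cite{aaaa}'', so there is no in-paper proof to compare against. Your outline does follow the standard route by which the result is established there: extend scalars to a splitting field, replace $f$ by a multilinear consequence, convert the $A$-action into a grading by the character group via the weight-space decomposition $L=\bigoplus_\chi L_\chi$ with $[L_\chi,L_\psi]\subseteq L_{\chi\psi}$ and identity component $C_L(A)$, and then invoke a quantitative theorem on identities of group-graded Lie algebras. That architecture is right.

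The difficulty is that, as written, your argument has a genuine gap exactly where you locate ``the heart of the matter.'' Step 3 --- if $M=\bigoplus_{g\in G}M_g$ is graded by a group of order $m$ and $M_1$ satisfies a multilinear identity of degree $d$, then $M$ satisfies an identity of $\{m,d\}$-bounded degree --- is the entire content of the Bahturin--Zaicev theorem, and you do not prove it; you only announce that a ``Ramsey/pigeonhole argument'' should work and concede that the Jacobi-identity bookkeeping is ``where the bookkeeping is most demanding.'' Nothing in the proposal shows how to produce enough consecutive sub-brackets in $M_1$ in a form to which $f$ can actually be applied, which is precisely the nontrivial combinatorial construction of \cite{BZ}. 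A second, smaller gap is in Step 2: for non-soluble $A$ there is no normal series with abelian quotients, so ``induction along a normal series'' cannot reach the general case; Linchenko's extension \cite{l} proceeds by entirely different means (actions of the Hopf algebra $(KA)^{*}$ and smash products), not by iterating the abelian argument. Since the bound is permitted to depend on $K$, the subtleties of linearization in small characteristic are absorbed, so Step 1 is fine; but until Step 3 is actually carried out (or explicitly cited as the quantitative graded-PI theorem of Bahturin--Zaicev), the proposal is a correct plan rather than a proof.
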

The combination of Theorems \ref{Z1} and \ref{bZ1} yields the following corollary.
\begin{corollary}\label{ZelBazaField}
Let $L$ be a Lie algebra over a field $K$ and $A$ a finite group of automorphisms of $L$ such that $C_L(A)$ satisfies the polynomial identity $f\equiv 0$. Suppose that the characteristic of $K$ is either $0$ or prime to the order of $A$. Assume that $L$ is generated by an $A$-invariant set of $m$ elements in which every commutator is ad-nilpotent of index at most $n$. Then $L$ is nilpotent of $\{|A|,f,K,m,n\}$-bounded class.
\end{corollary}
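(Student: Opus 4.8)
The plan is simply to chain together the two quantitative results just recorded. By hypothesis $C_L(A)$ satisfies the polynomial identity $f\equiv 0$, and the characteristic of $K$ is either $0$ or coprime to $|A|$, so the assumptions of Theorem \ref{bZ1} are in force. Applying it, I would first obtain that $L$ itself satisfies some polynomial identity $g\equiv 0$ whose degree is bounded in terms of $\{|A|,f,K\}$. The particular shape of $g$ is immaterial; what is retained is only that its degree is controlled by these three data.

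With this in hand I would turn to Theorem \ref{Z1}. By hypothesis $L$ is generated by a set of $m$ elements (the given $A$-invariant generating set), and every commutator in these generators is ad-nilpotent of index at most $n$; and we have just seen that $L$ satisfies the identity $g\equiv 0$. Hence all the hypotheses of Theorem \ref{Z1} are satisfied, and it gives that $L$ is nilpotent of class bounded in terms of $\{g,K,m,n\}$. Composing the two bounds then yields a nilpotency class bounded in terms of $\{|A|,f,K,m,n\}$, which is exactly the assertion.

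There is no substantial difficulty here beyond keeping track of the parameters, but one point deserves attention. Theorem \ref{Z1} is now invoked with the identity $g$ furnished by Theorem \ref{bZ1} rather than with the original $f$, so to absorb the result into an $\{|A|,f,K,m,n\}$-bound one must use that the nilpotency class supplied by Theorem \ref{Z1} depends on the identity only through its degree (together with the field). Since that degree is itself $\{|A|,f,K\}$-bounded by Theorem \ref{bZ1}, the composition goes through. Finally, the $A$-invariance of the generating set plays no role in this deduction---it is recorded in the statement only because that is the form in which the corollary will later be applied---and may be disregarded for the present argument.
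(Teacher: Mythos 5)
Your proposal is correct and is exactly the paper's argument: the paper derives the corollary by precisely this chaining, stating only that ``the combination of Theorems \ref{Z1} and \ref{bZ1} yields the corollary.'' Your additional remark about tracking how the class bound from Theorem \ref{Z1} depends on the identity supplied by Theorem \ref{bZ1} is a sensible gloss on a point the paper leaves implicit.
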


In the present article we will be required to work with Lie rings, and not only with Lie algebras. As usual, $\gamma_i(L)$ denotes the $i$th term of the lower central series of $L$. In \cite{shusa} the following result for Lie rings was established.

\begin{theorem}\label{BazaRing} Let $L$ be a Lie ring and $A$ a finite group of automorphisms of $L$ such that $C_L(A)$ satisfies the polynomial identity $f\equiv 0$. Further, assume that $L$ is generated by an $A$-invariant set of $m$ elements such that every commutator in the generators is ad-nilpotent of index at most $n$. Then there exists positive integers $e$ and $c$, depending only on $|A|, f, m$ and $n$, such that  $e\gamma_c(L)=0$.
\end{theorem}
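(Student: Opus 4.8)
The plan is to derive this Lie ring statement from its Lie algebra counterpart, Corollary~\ref{ZelBazaField}, by analysing $L$ one prime at a time together with its rationalisation. The only obstruction to applying the field results directly is that the Bahturin--Zaicev--Linchenko criterion (Theorem~\ref{blz}) requires the characteristic to be $0$ or coprime to $|A|$. This is exactly why the primes dividing $|A|$ must be treated separately, and why a factor $e$, rather than outright nilpotency, is forced to appear in the conclusion.

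First I would record the field reductions. Tensoring with $\mathbb{Q}$, the Lie algebra $L\otimes\mathbb{Q}$ inherits the hypotheses of Corollary~\ref{ZelBazaField}: the action of $A$ extends, $C_{L\otimes\mathbb{Q}}(A)=C_L(A)\otimes\mathbb{Q}$ still satisfies $f\equiv 0$ (after multilinearising the identity), and the image of the $A$-invariant generating set consists of $m$ elements whose commutators remain ad-nilpotent of index at most $n$. Hence $L\otimes\mathbb{Q}$ is nilpotent of some $\{|A|,f,m,n\}$-bounded class $c$, which shows that $\gamma_{c+1}(L)$ lies in the torsion subgroup of $(L,+)$. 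For a prime $p$ not dividing $|A|$ the quotient $L/pL$ is a Lie algebra over $\mathbb{F}_p$; coprimeness gives $C_{L/pL}(A)=(C_L(A)+pL)/pL$, so the hypotheses of Corollary~\ref{ZelBazaField} hold again with the \emph{same} parameters, and the class bound $c$ is uniform in $p$. Consequently $\gamma_{c+1}(L)\subseteq pL$ for every prime $p$ coprime to $|A|$.

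Next I would amplify these inclusions. Since $[pL,L]=p\gamma_2(L)$, a routine induction gives $\gamma_{c+1+j}(L)\subseteq p\,\gamma_{j+1}(L)$ for all $j\geq 0$, and hence $\gamma_{kc+1}(L)\subseteq p^{k}L$ for every $k\geq 1$ and every prime $p$ coprime to $|A|$. This is the device that trades terms of the lower central series for powers of $p$. Combining it with the ad-nilpotency of the generators -- which keeps the lower central series from manufacturing ever-higher $p$-torsion -- and localising at each such prime $p$ (where the torsion of $L_{(p)}$ is exactly its $p$-component and $\bigcap_{k}p^{k}L_{(p)}$ can be controlled), I would show that for a suitable $\{|A|,f,m,n\}$-bounded $c$ the term $\gamma_{c}(L)$ has trivial $p$-component for every $p$ coprime to $|A|$. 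In other words, $\gamma_{c}(L)$ is a $\pi$-group, where $\pi$ is the bounded set of primes dividing $|A|$.

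It then remains to bound the exponent of this $\pi$-group, and this is where I expect the genuine difficulty to lie. The primes in $\pi$ are invisible to the Bahturin--Zaicev--Linchenko reduction, so their torsion must be estimated by hand: the aim is to show that, once the class has been pushed high enough, the $p$-component for each $p\in\pi$ is annihilated by an $\{|A|,f,m,n\}$-bounded power of $p$, so that $e$ may be taken to be the product of these finitely many bounded prime powers. Here I would use that once $L$ is known to be nilpotent of bounded class it is finitely generated as an abelian group, with $\gamma_{c}(L)$ spanned by the finitely many basic commutators of weight $\geq c$, whose additive orders are in turn constrained by the ad-nilpotency relations $[x,_{n}a]=0$ among the generators. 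A technical wrinkle to dispatch along the way is that $(L,+)$ need not be finitely generated at the outset; I would circumvent this by passing to the subring generated by the given finite set together with its $A$-translates and working inside its $p$-localizations, where Krull-type separation legitimises the intersection arguments used above. Assembling the good-prime and bad-prime information then yields $e\,\gamma_{c}(L)=0$ with both $e$ and $c$ bounded in terms of $|A|,f,m$ and $n$.
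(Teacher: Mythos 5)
A preliminary remark: the paper does not prove Theorem~\ref{BazaRing} at all --- it is quoted from \cite{shusa} --- so there is no internal proof to compare yours against, and I can only judge the proposal on its own terms. Your overall strategy (reduce to the field case via $L\otimes\mathbb{Q}$ and the reductions $L/pL$, and let $e$ absorb the primes dividing $|A|$) is the natural one and is broadly in the spirit of how such statements are handled in the literature, but as written the argument has genuine gaps at exactly the two points where the real work happens.

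First, the claim that the nilpotency class of $L/pL$ is ``uniform in $p$'' is unjustified: Corollary~\ref{ZelBazaField} bounds the class in terms of $\{|A|,f,K,m,n\}$, and $K=\mathbb{F}_p$ varies with $p$, so the hypotheses are emphatically not ``the same parameters''. Unless you show that the dependence on $K$ is only through finitely many small characteristics (say by an ultraproduct argument, or by tracking where the characteristic enters Zelmanov's proof), the set of primes you must exile into $e$ is unbounded and no bounded $e$ results. Second, and more seriously, the final step --- bounding the exponent of the torsion of $\gamma_c(L)$ --- is announced rather than proved. The inclusions $\gamma_{kc+1}(L)\subseteq p^kL$ give divisibility, and divisibility does not kill torsion: in $\mathbb{Z}/p^{k+1}\mathbb{Z}$ the subgroup $p^k\mathbb{Z}/p^{k+1}\mathbb{Z}$ is nonzero, and a divisible (Pr\"ufer) $p$-component would survive every $p^kL_{(p)}$, so the ``Krull-type separation'' you invoke needs an actual finiteness input that is never established. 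The appeal to $L$ being finitely generated as an abelian group ``once $L$ is known to be nilpotent of bounded class'' is circular, since $L$ itself is never shown to be nilpotent (only $L\otimes\mathbb{Q}$ and the various $L/pL$ are), and it is not explained how the relations $[x,{}_n a]=0$ constrain the additive orders of basic commutators. For the primes dividing $|A|$ the Bahturin--Zaicev--Linchenko step is unavailable for $L/pL$, so a genuinely different device must enter there (Lemma~\ref{lemmanovo}, which converts a nilpotent subring with ad-nilpotent generators into the exact vanishing $[L,{}_u H]=0$ over $\mathbb{Z}$, with no torsion caveat, is the natural candidate); your proposal correctly locates the difficulty at this point but does not resolve it.
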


Another useful lemma, whose proof can be found in \cite{KS}, is the following. 

\begin{lemma}\label{lemmanovo}
Let $L$ be a Lie ring and $H$ a subring of $L$ generated by $m$ elements $h_{1},\ldots, h_{m}$ such that all commutators in $ h_i$ are  ad-nilpotent in $L$ of index at most $n$. If $H$ is nilpotent of class $c$, then for some $\{c, m,n\}$-bounded number $u$ we have  $[L,\underbrace{H,\ldots, H}_u]=0.$
\end{lemma}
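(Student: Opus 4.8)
The plan is to pass to the adjoint representation and reduce the statement to a nilpotency estimate for an associative algebra of operators. Write $\bar{x}=\mathrm{ad}_L(x)$ for the operator $y\mapsto[y,x]$ on $L$, and recall that $x\mapsto\bar{x}$ is a Lie ring homomorphism from $L$ into $\mathrm{End}(L)$ (with the commutator bracket), so $\overline{[x,y]}=[\bar{x},\bar{y}]$. Since $H$ is nilpotent of class $c$, its image $\mathcal{D}:=\overline{H}$ is a Lie subring of $\mathrm{End}(L)$ that is nilpotent of class at most $c$ and is generated by the $m$ operators $\bar{h}_1,\ldots,\bar{h}_m$. The first observation is that, because $H$ is additively generated by the commutators in the $h_i$, one has $[L,H]=L\cdot\mathcal{D}$ and, more generally,
\[
[L,\underbrace{H,\ldots,H}_{u}] \;=\; L\cdot\mathcal{D}^{u},
\]
where $\mathcal{D}^{u}$ denotes the additive span of all associative products of length $u$ of elements of $\mathcal{D}$. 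Thus it suffices to prove that $\mathcal{D}^{u}=0$ for some $\{c,m,n\}$-bounded $u$, i.e.\ that the associative envelope of $\mathcal{D}$ is nilpotent of bounded index.

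To this end I would fix the additive generating set $E_1,\ldots,E_M$ of $\mathcal{D}$, where $E_k=\bar{d}_k$ and $d_1,\ldots,d_M$ run over the basic commutators of weight at most $c$ in $h_1,\ldots,h_m$; here $M=M(m,c)$ is bounded because $\mathcal{D}$ is $m$-generated and nilpotent of class at most $c$. Assign to $E_k$ the weight $w_k\le c$ equal to the weight of $d_k$. Two facts drive the argument. First, each $d_k$ is a commutator in the $h_i$ and is therefore ad-nilpotent of index at most $n$ by hypothesis, so $E_k^{\,n}=0$ as an operator on $L$. Second, since $\{E_k\}$ additively generates the Lie ring $\mathcal{D}$, every Lie commutator $[E_j,E_{j'}]=\overline{[d_j,d_{j'}]}$ is an integer combination of the $E_k$ of weight $w_j+w_{j'}$ (and is zero once this exceeds $c$).

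The core step is then a collection (PBW-type) argument. Using repeatedly the identity $E_{j'}E_j=E_jE_{j'}+[E_{j'},E_j]$ together with the second fact above, every associative product $E_{k_1}\cdots E_{k_u}$ rewrites as a $\mathbb{Z}$-linear combination of ordered monomials $E_1^{a_1}E_2^{a_2}\cdots E_M^{a_M}$. Crucially, the \emph{total weight} $\sum_k a_k w_k$ of each monomial produced equals the total weight $\sum_i w_{k_i}\ (\ge u)$ of the initial product, because in each step the pair $E_{j'}E_j$ of total weight $w_{j'}+w_j$ is replaced either by $E_jE_{j'}$ or by the single commutator $[E_{j'},E_j]$, both of total weight $w_{j'}+w_j$, while nilpotency of class $c$ forces the process to terminate. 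Now if every exponent satisfied $a_k\le n-1$, the total weight would be at most $(n-1)\sum_k w_k=:(n-1)W$, with $W\le Mc$. Hence as soon as $u>(n-1)W$, every ordered monomial that appears has some $a_k\ge n$, so it contains the block $E_k^{\,n}=0$ and vanishes. Taking $u=(n-1)W+1$, which is $\{c,m,n\}$-bounded, we get $\mathcal{D}^{u}=0$ and therefore $[L,\underbrace{H,\ldots,H}_{u}]=0$.

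The hard part will be making the collection process fully rigorous over $\mathbb{Z}$: one must check that the rewriting terminates and preserves total weight, and that no torsion phenomena obstruct the conclusion. Since, however, we only need a spanning set of ordered monomials (not a free basis) and since each basic commutator of weight exceeding $c$ is literally zero in $\mathcal{D}$, a double induction on total weight and on the number of out-of-order adjacent pairs should suffice. Everything else reduces to bookkeeping with the $\{m,c\}$-bounded quantities $M$ and $W$.
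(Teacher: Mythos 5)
Your argument is correct and is essentially the proof given in the reference \cite{KS} that the paper cites for this lemma: span $H$ by the boundedly many commutators of weight at most $c$ in the $h_i$, each ad-nilpotent of index at most $n$, and run the weight-preserving collection process on the adjoint operators until some factor occurs $n$ times in a row. The only cosmetic point is that the terminating induction should be on the number of factors (which drops in the commutator branch) together with the number of inversions, rather than on the total weight, which is preserved.
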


Recall that the identity $$\sum_{\sigma\in S_n}[y,x_{\sigma(1)},\ldots, x_{\sigma(n)}]\equiv 0$$ is called the linearized $n$-Engel identity. In general, Theorem \ref{Z1992} cannot be extended to the case where $L$ is just a Lie ring (rather than a Lie algebra over a field). However such an extension does hold in the particular case where the polynomial identity $f\equiv 0$ is a linearized Engel identity.  More precisely, by combining Theorems \ref{BazaRing}, \ref{Z1} and Lemma \ref{lemmanovo}  the following result can be obtained. See \cite{shusa} for further details.

\begin{theorem}\label{Zelring} Let $F$ be the free Lie ring and $f$ an element of $F$ (Lie polynomial) such that $f\not\in pF$ for any prime $p$. Suppose that $L$ is a Lie ring generated by finitely many elements $ a_1,\ldots, a_m$ such that all commutators in the generators are ad-nilpotent of index at most $n$. Assume that $L$ satisfies the identity $f\equiv 0$. Then $L$ is nilpotent with $\{f,m,n\}$-bounded class.
\end{theorem}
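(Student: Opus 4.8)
The plan is to first bound the nilpotency class of $L$ ``modulo bounded torsion'' and then to strip away that torsion prime by prime, the hypothesis $f\notin pF$ being exactly what enables the second step. First I would apply Theorem~\ref{BazaRing} with the trivial automorphism group $A=1$. Then $C_L(A)=L$ satisfies $f\equiv 0$, the generating set $\{a_1,\dots,a_m\}$ is (vacuously) $A$-invariant, every commutator in the generators is ad-nilpotent of index at most $n$, and $|A|=1$. Theorem~\ref{BazaRing} therefore yields $\{f,m,n\}$-bounded integers $e$ and $c$ with $e\gamma_c(L)=0$. Thus $\gamma_c(L)$ is an ideal of $L$ whose additive group has exponent dividing $e$, and the whole problem is reduced to upgrading this finiteness of exponent to genuine nilpotency of $L$ of bounded class.

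The second ingredient is reduction modulo a prime. Fix a prime $p$ dividing $e$ and pass to $\bar L=L/pL$, a Lie algebra over the field of $p$ elements. The images $\bar a_1,\dots,\bar a_m$ generate $\bar L$, their commutators are ad-nilpotent of index at most $n$, and $\bar L$ satisfies the identity $\bar f\equiv 0$, where $\bar f$ is the reduction of $f$. Here the hypothesis enters decisively: since $f\notin pF$, the element $\bar f$ is nonzero, so $\bar f\equiv 0$ is a genuine polynomial identity, and Theorem~\ref{Z1} applies to show that $\bar L$ is nilpotent of class bounded in terms of $f,p,m,n$. Because every prime $p$ that occurs divides the $\{f,m,n\}$-bounded number $e$, each such $p$ is itself $\{f,m,n\}$-bounded, and so there is a single $\{f,m,n\}$-bounded integer $d$ with $\gamma_{d+1}(L)\subseteq pL$ for all $p\mid e$. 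Iterating this inclusion gives $\gamma_{jd+1}(L)\subseteq p^{\,j}L$ for every $j$.

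It remains to combine these facts into actual nilpotency. Decomposing the bounded ideal $\gamma_c(L)$ into its primary components $\bigoplus_{p\mid e}M_p$, each $M_p$ is an ideal of $L$ and $[M_p,M_q]=0$ for $p\neq q$; intersecting the ideals $\bigoplus_{q\neq p}M_q$ embeds $L$ into the finite direct product of the quotients $L/\bigl(\bigoplus_{q\neq p}M_q\bigr)$, which reduces the problem to the case where $\gamma_c(L)$ is a $p$-group of exponent dividing $p^{k}$ with $k$ bounded. Here I would descend the filtration of $\gamma_c(L)$ by the subgroups $p^{i}\gamma_c(L)$, whose $k$ successive quotients are Lie algebras over the field of $p$ elements. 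On each such layer the nilpotency of $\bar L=L/pL$ supplied by Theorem~\ref{Z1} controls the Lie structure, while Lemma~\ref{lemmanovo} is used to pass from nilpotency of the subring generated by the $a_i$ to a bounded relation $[L,_uH]=0$; together these should bound the number of steps of the lower central series of $L$ needed to annihilate $\gamma_c(L)$, giving $\gamma_{c'}(L)=0$ for some $\{f,m,n\}$-bounded $c'$.

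I expect this last step to be the main obstacle. The delicate point is that the additive divisibility $\gamma_{jd+1}(L)\subseteq p^{\,j}L$ produced by reduction modulo $p$ does not by itself annihilate a bounded-exponent ideal: one must reconcile the genuine $p$-power torsion of $\gamma_c(L)$ with the Lie multiplication while keeping every estimate uniform in $p$. This is precisely where the condition $f\notin pF$, guaranteeing a nontrivial identity after each reduction, and Lemma~\ref{lemmanovo}, which bounds the action of a nilpotent subring rather than merely its abstract structure, are indispensable, and it is what makes the Lie-ring statement strictly harder than its Lie-algebra counterpart Theorem~\ref{Z1992}.
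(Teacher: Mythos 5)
Your strategy is the one the paper itself indicates (the paper gives no detailed proof of Theorem~\ref{Zelring}, only the remark that it follows by combining Theorems~\ref{BazaRing}, \ref{Z1} and Lemma~\ref{lemmanovo}, with details in \cite{shusa}), and your first two reductions are correct: Theorem~\ref{BazaRing} with $A=1$ gives $\{f,m,n\}$-bounded integers $e,c$ with $e\gamma_c(L)=0$; for each $p\mid e$ the quotient $L/pL$ satisfies the nontrivial identity $\bar f\equiv 0$ precisely because $f\notin pF$, so Theorem~\ref{Z1} makes it nilpotent of class at most some $d$ that is $\{f,m,n\}$-bounded since $p\le e$; and the primary decomposition of $\gamma_c(L)$ reduces everything to the case where $\gamma_c(L)$ has exponent $p^{k}$ with $p$ and $k$ bounded. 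The genuine gap is the final step, which you explicitly leave open: you never convert the divisibility $\gamma_{jd+1}(L)\subseteq p^{j}L$ into annihilation of the torsion ideal, and the mechanism you propose for doing so cannot work as stated --- Lemma~\ref{lemmanovo} applied to the subring generated by $a_1,\dots,a_m$ is vacuous here, since that subring is all of $L$, so its conclusion is exactly the nilpotency you are trying to prove.

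The good news is that the obstacle you worry about is illusory and the step closes in one line, with no appeal to Lemma~\ref{lemmanovo}. Since the Lie bracket is $\mathbb{Z}$-bilinear, $[p^{j}L,\underbrace{L,\dots,L}_{c-1}]=p^{j}\gamma_c(L)$; hence from $\gamma_{jd+1}(L)\subseteq p^{j}L$ you get
$$\gamma_{jd+c}(L)=[\gamma_{jd+1}(L),\underbrace{L,\dots,L}_{c-1}]\subseteq p^{j}\gamma_c(L).$$
Taking $j=k$, where $p^{k}$ annihilates the primary component under consideration, gives $\gamma_{kd+c}(L)=0$, so $L$ is nilpotent of class less than $kd+c$, which is $\{f,m,n\}$-bounded. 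There is no need to intersect $p^{j}L$ with $\gamma_c(L)$ (which, as you rightly sense, would not yield $p^{j}\gamma_c(L)$); the point is to anchor the iteration at $\gamma_c(L)$ by bracketing $c-1$ more times with $L$ before comparing with the torsion. With this replacement your argument becomes a complete and correct proof along the route the paper prescribes.
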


      \section{On associated Lie rings}
There are several well-known ways to associate a Lie ring to a group (see \cite{Huppert2,Khu1,aaaa}). For the reader's convenience we will briefly describe the construction that we are using in the present paper.

Let $G$ be a group. A series of subgroups $$G=G_1\geq G_2\geq\dots\eqno{(*)}$$ is called an $N$-series if it satisfies $[G_i,G_j]\leq G_{i+j}$ for all $i,j$. Obviously any $N$-series is central, i.e. $G_i/G_{i+1}\leq Z(G/G_{i+1})$ for any $i$.  Given an $N$-series $(*)$, let $L^*(G)$ be the direct sum of the abelian groups $L_i^*=G_i/G_{i+1}$, written additively. Commutation in $G$ induces a binary operation $[,]$ in $L^*(G)$. For homogeneous elements $xG_{i+1}\in L_i^*,yG_{j+1}\in L_j^*$ the operation is defined by $$[xG_{i+1},yG_{j+1}]=[x,y]G_{i+j+1}\in L_{i+j}^*$$ and extended to arbitrary elements of $L^*(G)$ by linearity. It is easy to check that the operation is well-defined and that $L^*(G)$ with the operations $+$ and $[,]$ is a Lie ring. 

An $N$-series is called an $N_{p}$-series if  $G_{i}^{p}\leq G_{pi}$ for all $i$.
If all quotients $G_i/G_{i+1}$ of an $N$-series $(*)$ have prime exponent $p$ then $L^*(G)$ can be viewed as a Lie algebra over the field with $p$ elements. 
Observe that an important example of an $N_p$-series is  the  case where the series $(*)$ is the $p$-dimension central series, also known under the name of Zassenhaus-Jennings-Lazard series (see \cite[p.\ 250]{Huppert2} for details).

Any automorphism of $G$ in the natural way induces an automorphism of $L^*(G)$. If $G$ is profinite and $\alpha$ is a coprime automorphism of $G$, then the subring (subalgebra) of fixed points of $\alpha$ in $L^*(G)$ is isomorphic with the Lie ring associated to the group $C_G(\alpha)$ via the series formed by intersections of $C_G(\alpha)$ with the series $(*)$ (see \cite{aaaa} for more details).

 In the case where the series $(*)$ is just the lower central series of $G$ we write $L(G)$ for the associated Lie ring. In the case where the series $(*)$ is the $p$-dimension central series of $G$ we write $L_p(G)$ for the subalgebra generated by the first homogeneous component $G_1/G_2$ in the associated Lie algebra over the field with $p$ elements.

We will also require the following lemma that essentially is due to Wilson and Zelmanov (cf \cite[Lemma in Section 3]{WZ}.
\begin{lemma}\label{leWZ} Let $G$ be a profinite group and $g\in G$ an element such that for any $x\in G$ there exists a positive $n$ with the property that $[x,{}_n\,g]=1$. Let $L^*(G)$ be the Lie algebra associated with $G$ using an $N_p$-series $(*)$ for some prime $p$. Then the image of $g$ in $L^*(G)$ is ad-nilpotent.
\end{lemma}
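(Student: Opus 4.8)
The plan is to prove that the operator $\mathrm{ad}\,\bar g$ on the graded Lie ring $L^*(G)=\bigoplus_i G_i/G_{i+1}$ is nilpotent, which is exactly the assertion that $\bar g$ is ad-nilpotent. Replacing $G$ by $G/\bigcap_i G_i$ (this quotient is profinite, the image of $g$ is unaffected, and $g$ remains a left Engel element), I may assume $\bigcap_i G_i=1$. If $g\in\bigcap_i G_i$ its image is $0$ and there is nothing to prove; otherwise let $s$ be the largest index with $g\in G_s$, so that $\bar g=gG_{s+1}$ is a nonzero homogeneous element of degree $s\ge1$. Since $\mathrm{ad}\,\bar g$ raises degrees by $s$ and $L^*(G)$ is the direct sum of its components $L_j^*=G_j/G_{j+1}$, it suffices to produce a single $M$ with $(\mathrm{ad}\,\bar g)^M L_j^*=0$ for every $j$; as every element of $L_j^*$ is of the form $hG_{j+1}$ and $(\mathrm{ad}\,\bar g)^m(hG_{j+1})=[h,{}_m g]\,G_{j+ms+1}$, this amounts to finding $M$ with $[h,{}_M g]\in G_{\deg h+Ms+1}$ for all $h\in G$.

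The Engel hypothesis supplies, for each $x$, only some index $n(x)$ with $[x,{}_{n(x)}g]=1$, and the heart of the matter is to replace these unbounded indices by one uniform bound. I would achieve this by a Baire category argument. For $n\ge1$ the set $E_n=\{x\in G:[x,{}_n g]=1\}$ is closed, being the preimage of $\{1\}$ under the continuous map $x\mapsto[x,{}_n g]$; the $E_n$ increase with $n$, and $\bigcup_n E_n=G$ by hypothesis. Since $G$ is compact Hausdorff, hence a Baire space, some $E_N$ has nonempty interior and so contains a coset $wH$ of an open normal subgroup $H$. In particular $[w,{}_N g]=1$ and $[wh,{}_N g]=1$ for every $h\in H$.

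The key step then transfers this coset information into every deep component. I would establish, by induction on $n$ using $[ab,g]=[a,g]^b[b,g]$, the commutator congruence
\[
[wh,{}_n g]\equiv[w,{}_n g]\,[h,{}_n g]\pmod{G_{\deg h+ns+1}}
\]
valid for all $h\in G$, the omitted correction commutators each involving the factor $h$, at least one occurrence of $w$, and at least $n$ occurrences of $g$, hence lying in $G_{\deg w+\deg h+ns}\subseteq G_{\deg h+ns+1}$ because $\deg w\ge1$. Taking $n=N$ and $h\in H$ and using $[w,{}_N g]=[wh,{}_N g]=1$ yields $[h,{}_N g]\in G_{\deg h+Ns+1}$, i.e. $(\mathrm{ad}\,\bar g)^N$ annihilates the image of every element of $H$. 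Since $H$ is open while the $G_i$ are closed with $\bigcap_i G_i=1$, a compactness argument gives $j_0$ with $G_{j_0}\le H$; hence $(\mathrm{ad}\,\bar g)^N L_j^*=0$ for all $j\ge j_0$.

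Finally I would absorb the finitely many low components: for $j<j_0$ the map $(\mathrm{ad}\,\bar g)^k$ sends $L_j^*$ into $L_{j+ks}^*$, which lies in degree $\ge j_0$ as soon as $k\ge(j_0-j)/s$, whereupon a further block of $N$ applications kills it. Thus $M=N+\lceil(j_0-1)/s\rceil$ satisfies $(\mathrm{ad}\,\bar g)^M L_j^*=0$ for every $j$, so $\bar g$ is ad-nilpotent. I expect the main obstacle to be exactly the passage from the pointwise Engel condition to a uniform bound: individually each homogeneous element is killed by some power of $\mathrm{ad}\,\bar g$, but local nilpotence of a derivation need not entail nilpotence, so it is the compactness of $G$ (through Baire category), together with the collection identity that propagates control from the single coset $wH$ to all sufficiently deep components, that makes the argument go through.
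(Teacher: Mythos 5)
Your proof is correct and follows essentially the same route as the source: the paper quotes this lemma from Wilson--Zelmanov without reproving it, and the standard argument there is exactly your combination of the Baire category theorem (to extract a uniform Engel index $N$ on a coset $wH$ of an open subgroup) with the commutator-collection congruence that pushes the error terms into $G_{\deg h+Ns+1}$, followed by the grading argument for the finitely many low components. The same Baire-category device is also the one the authors use explicitly in their proof of Theorem B2, so your reconstruction is fully in the spirit of the paper.
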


We  close this section by quoting the following result, which  is a finite version of Lazard's criterion for a pro-$p$ group to be $p$-adic analytic. The proof can be found in \cite{KS}.

\begin{theorem}\label{finiteL}
Let $P$ be a $d$-generated finite $p$-group and suppose that $L_p(G)$ is nilpotent of class $c$. Then  $P$  has a powerful characteristic subgroup of  $\{p,c,d\}$-bounded index.
\end{theorem}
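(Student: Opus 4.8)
The plan is to bound the sectional rank of $P$ in terms of $p$, $c$ and $d$, and then to appeal to the Lubotzky--Mann theory of powerful $p$-groups \cite{luma}, which guarantees that a finite $p$-group of rank $r$ possesses a powerful characteristic subgroup of $\{p,r\}$-bounded index. Throughout, $D_i=D_i(P)$ denotes the $i$th term of the $p$-dimension central series, so that the full associated restricted Lie algebra $\operatorname{gr}(P)=\bigoplus_i D_i/D_{i+1}$ over the field of $p$ elements contains $L_p(P)$ as the Lie subalgebra generated by the first homogeneous component $L_1=D_1/D_2=P/\Phi(P)$.

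First I would record the elementary dimension bound. Since $P$ is $d$-generated we have $\dim L_1\le d$, so $L_p(P)$ is a nilpotent Lie algebra of class $c$ generated by at most $d$ elements; by the usual estimate for free nilpotent Lie algebras its dimension is at most some $\{c,d\}$-bounded number $m$. Next I would translate this into the group. Because $L_p(P)$ is generated in degree $1$, its homogeneous component of degree $i$ is exactly the image of $\gamma_i(P)$ in $D_i/D_{i+1}$; hence nilpotency of class $c$ is equivalent to $\gamma_i(P)\le D_{i+1}$ for every $i>c$. Counting restricted monomials (a monomial of degree $i$ consumes exactly $i$ generator slots) shows that $\sum_{i\le c}\dim(D_i/D_{i+1})$ is $\{p,c,d\}$-bounded, so that $|P:D_{c+1}|$ is already $\{p,c,d\}$-bounded.

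The heart of the argument, and the step I expect to be the main obstacle, is to bound $\dim(D_i/D_{i+1})$ uniformly in $i$. For $i>c$ the subalgebra $L_p(P)$ contributes nothing in degree $i$, so the layer $D_i/D_{i+1}$ is produced entirely from lower layers by the restricted $p$-power map $[p]\colon D_j/D_{j+1}\to D_{pj}/D_{pj+1}$. This map is Frobenius-semilinear and therefore does not increase dimensions; the difficulty is that, by Jacobson's formula $(a+b)^{[p]}=a^{[p]}+b^{[p]}+\sum_i s_i(a,b)$, it is not additive, and the correction terms $s_i(a,b)$ are iterated brackets of $p$th powers. One must show that these corrections remain inside a subspace of $\{p,c,d\}$-bounded dimension, using the relations $\gamma_i(P)\le D_{i+1}$ ($i>c$) repeatedly to absorb every genuine commutator into the next term of the series. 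Carrying this out yields a bound $\dim(D_i/D_{i+1})\le R_0$, valid for all $i$, with $R_0$ a $\{p,c,d\}$-bounded number.

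Finally, a finite $p$-group all of whose dimension-series layers have dimension at most $R_0$ has rank bounded in terms of $p$ and $R_0$: any subgroup $H$ inherits the filtration by the subgroups $H\cap D_i$, whose layers embed into those of $P$, and the Lubotzky--Mann analysis of such filtrations \cite{luma} converts the uniform layer bound into a $\{p,R_0\}$-bounded bound on $d(H)$, hence on $\operatorname{rk}(P)$. With $\operatorname{rk}(P)$ now $\{p,c,d\}$-bounded, the Lubotzky--Mann theorem quoted above furnishes a powerful characteristic subgroup of $\{p,c,d\}$-bounded index, which is the assertion of the theorem.
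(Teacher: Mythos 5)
The paper itself contains no proof of this statement: it is quoted from \cite{KS}, so your attempt must be judged on its own merits. Your first two steps are essentially fine, and in fact the step you single out as ``the heart of the argument'' is the easy part: modulo the homogeneous components of $L_p(P)$ the map $x\mapsto x^{[p]}$ \emph{is} additive, because in Jacobson's formula the corrections $s_i(a,b)$ lie in the Lie subalgebra generated by $a$ and $b$, hence in $L_p(P)$, and brackets such as $[a^{[p]},y]=\pm(\operatorname{ad}a)^{p}(y)$ likewise fall back into $L_p(P)$; so every layer $D_i/D_{i+1}$ has dimension at most $\dim L_p(P)$, which is $\{c,d\}$-bounded. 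The genuine gap is your final step, and it is not a matter of missing details: the principle you invoke there is false. It is \emph{not} true that a finite $p$-group whose dimension-series layers all have dimension at most $R_0$ has $\{p,R_0\}$-bounded rank. Take $W_m=C_p\wr C_{p^m}$, realized as $\mathbb{F}_p[t]/(t^{p^m})$ extended by the cyclic group $\langle x\rangle$ of order $p^m$ acting as multiplication by $1+t$. Computing $D_i=\prod_{jp^s\ge i}\gamma_j^{p^s}$ one finds that $D_i(W_m)$ is generated by $t^{i-1}\mathbb{F}_p[t]/(t^{p^m})$ and $x^{p^{\lceil\log_p i\rceil}}$, so every layer has order at most $p^2$; yet $W_m$ is $2$-generated and contains the elementary abelian base group, so $\operatorname{rk}(W_m)\ge p^m$ is unbounded. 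The flaw in your subgroup argument is that $d(H)$ is controlled by the \emph{sum} of the dimensions $\dim\bigl((H\cap D_i)D_{i+1}/D_{i+1}\bigr)$ over all $i$, not by their maximum: bounded width says nothing about how many layers a subgroup can spread over. Note that $L_p(W_m)$ has nilpotency class growing with $m$, which is consistent with the theorem and pinpoints exactly what you lost: after your second step you have used the nilpotency of $L_p(P)$ only to bound layer widths, and that information is strictly weaker than the hypothesis.

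A correct endgame has to feed the nilpotency of $L_p(P)$ back in, and this is what the cited source \cite{KS} does. The key structural point is that for $i>c$ the component $D_i/D_{i+1}$ is spanned purely by $[p^s]$-powers of homogeneous elements of $L_p(P)$, and these components are \emph{central} in the whole graded algebra: for $x^{[p^s]}$ of degree $i>c$ and any homogeneous $w$, the element $(\operatorname{ad}x)^{p^s}(w)$ is a Lie element of $L_p(P)$ of degree exceeding $c$, hence zero. So beyond degree $c$ the graded algebra is not merely thin but abelian and generated by $p$-th powers, which is what allows one to extract a \emph{powerful} subgroup of $\{p,c,d\}$-bounded index directly from a bounded term of the filtration, with no detour through a rank bound. (Your appeal to Lubotzky--Mann in the form ``rank $r$ implies a powerful characteristic subgroup of $\{p,r\}$-bounded index'' is itself correct; it is the rank bound feeding into it that cannot be obtained the way you propose.)
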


Recall that powerful $p$-groups were introduced by Lubotzky and Mann in \cite{luma}.  A finite $p$-group $P$ is said to be powerful if and only if $[P,P]\leq P^{p}$ for $p \neq 2$ (or $[P,P]\leq P^{4}$ for $p=2$), where  $P^{i}$  denotes   the subgroup of $P$ generated by all $i$th powers.  Powerful  $p$-groups have some nice properties. In particular, if $P$ is a powerful $p$-group, then the subgroups $\gamma_{i}(P), P^{(i)}$ and $P^{i}$ are also powerful. Moreover, for given positive integers $n_{1},\ldots,n_{s}$, it follows, by repeated applications of \cite[Propositions 1.6 and 4.1.6]{luma}, that
$$[P^{n_{1}},\ldots,P^{n_{s}}]\leq \gamma_{s}(P)^{n_{1}\cdots n_{s}}.$$
Furthermore if a powerful $p$-group $P$ is generated by $d$ elements, then any subgroup of $P$ can be generated by at most $d$ elements and $P$ is a product of $d$ cyclic subgroups.  

      \section{Proof of Theorem A3}
We are now ready to embark on the proof of our main results.
  
\begin{proposition}\label{prop_a2} Let $G$ be an $m$-generated group satisfying the hypothesis of Theorem A3. Let $p$ be a prime number that divides the order of $G$. 
\begin{enumerate}\label{prop3}
\item There exist positive integers $e, c$ depending only on $m, n$  and $q$, such that $e\gamma_{c}(L(G))=0$;
\item $L_p(G)$ is nilpotent of $\{m,n,p,q\}$-bounded class.
\end{enumerate}
\end{proposition}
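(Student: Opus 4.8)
The plan is to run the standard Lie-theoretic machinery on the associated Lie ring $L=L(G)$ for part (1) and on the Lie algebra $L_p(G)$ over the field with $p$ elements for part (2), both of which carry an induced action of $A$. Throughout I would exploit the coprime decomposition: since $A$ is elementary abelian of order $q^3$, hence noncyclic, each homogeneous component splits as a sum $\sum_i C_{(\cdot)}(A_i)$, where $A_1,\dots,A_t$ (with $t=q^2+q+1$) runs over the subgroups of index $q$ (order $q^2$). In particular the whole ring is the sum of the subrings $C_L(A_i)$ and the first homogeneous component $L_1$ is the sum of the $C_{L_1}(A_i)$. The crucial elementary fact, used repeatedly, is that two distinct $A_i,A_j$ intersect in a subgroup of order $q$, while a triple of them may already intersect trivially.

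First I would record the polynomial identity. For every nontrivial $K\le A$ the centralizer $C_G(K)$ lies in some $C_G(a)$ with $a\in A^{\#}$, hence is an $n$-Engel group; consequently its associated Lie ring $C_L(K)$ satisfies the linearized $n$-Engel identity $f\equiv 0$, and the image in $C_L(K)$ of any element of $C_G(K)$ is $\mathrm{ad}$-nilpotent of index at most $n$ inside $C_L(K)$. Taking $K=A$ shows that $C_L(A)$ satisfies $f\equiv 0$. For part (2) this already yields, via Theorem \ref{blz} (quantitatively Theorem \ref{bZ1}), a polynomial identity of $\{n,q\}$-bounded degree on all of $L_p(G)$; and since the coefficients of $f$ are $\pm1$ we have $f\notin pF$ for every prime $p$, which is exactly what makes the Lie-ring versions (Theorems \ref{BazaRing} and \ref{Zelring}) available in part (1).

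Next I would manufacture a bounded $A$-invariant generating set sitting inside the centralizers. As $G$ is $m$-generated, the first component $L_1$ is generated by at most $m$ elements; decomposing it as $\sum_i C_{L_1}(A_i)$ and using that a subgroup (resp. subspace) of an $m$-generated object is again boundedly generated, I obtain for each $i$ at most $m$ generators of $C_{L_1}(A_i)$, which I may lift to elements of $C_G(A_i)$. Their union is an $A$-invariant generating set of $\{m,q\}$-bounded size, each member lying in some $C_L(A_i)$. The key propagation step is that any such generator $v\in C_L(A_i)$ is $\mathrm{ad}$-nilpotent of index at most $n+1$ on all of $L$: writing $w=\sum_j w_j$ with $w_j\in C_L(A_j)$, the bracket $[w_j,v]$ lands in $C_L(A_i\cap A_j)$, a centralizer of a nontrivial subgroup of order $q$ on which $v$ acts $\mathrm{ad}$-nilpotently of index $\le n$, whence $\mathrm{ad}(v)^{n+1}w=0$. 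Here distinct index-$q$ subgroups meet in order $q\neq1$ precisely because $A$ has order $q^3$ rather than $q^2$; this is where the passage from Theorem A2 to Theorem A3 is forced already at the level of single generators, and the same argument applies to any commutator whose entries all come from one fixed $C_L(A_i)$.

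The main obstacle is to upgrade this to the full hypothesis of Zelmanov's theorem, namely that \emph{every} commutator in the chosen generators — including the mixed ones, whose entries come from different centralizers and whose joint fixed-point group may be trivial — is $\mathrm{ad}$-nilpotent of bounded index. My plan is not to check this commutator by commutator (which fails for mixed commutators, since three index-$q$ subgroups can intersect trivially), but to argue globally. For each $i$ the subring $H_i$ generated by the $i$-th block of generators is finitely generated, satisfies $f\equiv 0$, and has $\mathrm{ad}$-nilpotent generators, hence is nilpotent of $\{m,n\}$-bounded class by Theorem \ref{Zelring} (by Corollary \ref{ZelBazaField} in part (2)); since all commutators within a single block are $\mathrm{ad}$-nilpotent in $L$, Lemma \ref{lemmanovo} gives $[L,{}_u H_i]=0$ for a bounded $u$. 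Thus $L$ is generated by the boundedly many subrings $H_1,\dots,H_t$, each acting nilpotently on $L$, whose pairwise brackets are controlled by $[C_L(A_i),C_L(A_j)]\subseteq C_L(A_i)\cap C_L(A_j)$; a Kegel-type combinatorial analysis of left-normed commutators should then bound the nilpotency of $L$ in terms of $t$ and $u$. This yields the bounded nilpotency of $L_p(G)$ in part (2) and, after tracking the torsion multiplier produced by Theorem \ref{BazaRing}, the relation $e\gamma_c(L(G))=0$ in part (1). I expect this final assembly — squeezing a single bounded class out of the nilpotently-acting centralizer subrings, together with the torsion bookkeeping that produces $e$ — to be the genuinely delicate part of the argument.
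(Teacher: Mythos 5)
Your setup (reduction to centralizers of the maximal subgroups $A_i$, ad-nilpotence of elements of $C_L(A_i)$ via the intersection of two maximal subgroups being nontrivial, the single-block argument via Theorem \ref{Zelring} and Lemma \ref{lemmanovo}) matches the paper's, and you correctly identify the mixed commutators as the crux. But your proposed resolution of that crux has a genuine gap, in fact two. First, the containment $[C_L(A_i),C_L(A_j)]\subseteq C_L(A_i)\cap C_L(A_j)$ is false: the bracket of an $A_i$-fixed and an $A_j$-fixed element is only fixed by $A_i\cap A_j$, whereas $C_L(A_i)\cap C_L(A_j)=C_L(A)$ since $A_iA_j=A$. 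Second, and more seriously, the intended ``global'' conclusion --- that a Lie ring generated by boundedly many subrings $H_1,\dots,H_t$ with $[L,{}_u H_i]=0$ for each $i$ must be nilpotent by a Kegel-type combinatorial count --- is not a theorem: $\mathfrak{sl}_2$ is generated by the two one-dimensional subalgebras spanned by $e$ and $f$, each of which acts nilpotently of index $3$, yet $\mathfrak{sl}_2$ is simple. So no combinatorial rearrangement of left-normed commutators can close the argument; you are forced back to Zelmanov-type input, which requires ad-nilpotence of \emph{every} commutator in some generating set, i.e.\ exactly the mixed-commutator problem you set out to avoid.

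The idea you are missing is the paper's eigenvector trick. One first reduces to $G$ a $p$-group ($p\neq q$) via Zorn's theorem and Ward's Theorem \ref{ward}, then extends scalars to $\overline L=L\otimes\mathbb{Z}[\omega]$ (resp.\ $\mathbb{F}_p[\omega]$ for part (2)) where $\omega$ is a primitive $q$th root of unity. Since the action is coprime, $\overline L$ is generated by boundedly many common eigenvectors for $A$ lying in $\overline L_1$. The point is that a commutator of common eigenvectors is again a common eigenvector, and every common eigenvector lies in $C_{\overline L}(A_i)$ for some single maximal subgroup $A_i$ (the kernel of its character $A\to\langle\omega\rangle$); one never needs to intersect three maximal subgroups. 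Hence \emph{every} commutator in these generators, mixed or not, lands in some $\overline L_{ij}$, where your single-block argument (Theorem \ref{Zelring} plus Lemma \ref{lemmanovo}, applied to the subring generated by the $\mathbb{Z}[\omega]$-components $\omega^t x_t$ of the element) gives ad-nilpotence of $\{n,q\}$-bounded index. Theorem \ref{BazaRing} (resp.\ Corollary \ref{ZelBazaField}) then yields $e\gamma_c(\overline L)=0$ (resp.\ bounded nilpotency class), and $L$ embeds in $\overline L$. Your opening observations about the linearized $n$-Engel identity on $C_L(A)$ and the bounded generation of the generating set are fine and agree with the paper.
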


The proofs of the statements $(1)$ and $(2)$ are similar. We will give a detailed proof of the first statement. The proof of (2), with obvious changes, can be obtained simply by replacing every appeal to Theorem \ref{BazaRing} in the proof of (1) by an appeal to Corollary \ref{ZelBazaField}.  An important observation that is used in the proof is that when a finite $p$-group can be generated by $d$ elements, we can choose $d$ generators from any subset that generates the group. This follows from the well-known Burnside Basis Theorem \cite{Huppert1}. In the proof we also use the following theorem, due to Ward \cite{Ward}.
\begin{theorem}\label{ward}
Let $q$ be a prime and $A$ an elementary abelian group of order at least $q^3$. Suppose that $A$ acts coprimely on  a finite group $G$ and assume that $C_G(a)$ is nilpotent for each $a\in A^{\#}$.
Then $G$ is nilpotent.
\end{theorem}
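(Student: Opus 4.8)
The plan is to argue by contradiction: let $G$ be a counterexample of least order. The hypotheses pass to $A$-invariant sections, since $C_{G/N}(a)=C_G(a)N/N$ and $C_N(a)=C_G(a)\cap N$ are, respectively, a quotient and a subgroup of the nilpotent group $C_G(a)$; thus every proper $A$-invariant section of $G$ is nilpotent. The first substantial step is to secure that $G$ is soluble. Taking a minimal $A$-invariant normal subgroup $N$, if $N$ were non-abelian it would be a direct product $S_1\times\cdots\times S_k$ of isomorphic non-abelian simple groups permuted by $GA$; analysing the action of $A$ on the set of factors and using that each $C_{S_i}(a)$ is soluble (being contained in the nilpotent group $C_G(a)$), one rules this out. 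This reduction to the soluble case is the part I expect to be the main obstacle, since controlling the centralizers of coprime automorphisms inside simple groups ultimately rests on the classification of finite simple groups; I would quote the relevant consequence rather than reprove it.

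Once $G$ is known to be soluble I would pin down its structure. If two distinct primes $p_1,p_2$ divided $|F(G)|$, then $G$ would embed into $G/O_{p_1}(G)\times G/O_{p_2}(G)$, a product of two nilpotent groups (each a proper $A$-invariant quotient), forcing $G$ nilpotent; hence $F(G)=O_p(G)=:V$ is a $p$-group. Recall that in a soluble group $C_G(F(G))\le F(G)$, so $C_G(V)\le V$. If $G\ne V$ then $G/V$ is a nontrivial soluble group with $O_p(G/V)=1$ (a nontrivial normal $p$-subgroup of $G/V$ would pull back to a normal $p$-subgroup of $G$ properly containing $V$), so $O_{p'}(G/V)\ne 1$. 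Its preimage is an $A$-invariant normal subgroup $M=V\rtimes R$, where $R\ne 1$ is a $p'$-group; by coprimeness the complement $R$ may be chosen $A$-invariant.

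The heart of the argument is to show $[V,R]=1$, which yields the desired contradiction: it forces $R\le C_G(V)\le V$, impossible for the nontrivial $p'$-group $R$. For each $b\in A^{\#}$ coprimeness gives $C_M(b)=C_V(b)\,C_R(b)$, and since this sits inside the nilpotent group $C_G(b)$ while $C_V(b)$ is its Sylow $p$-subgroup and $C_R(b)$ its Hall $p'$-subgroup, the two factors commute: $[C_V(b),C_R(b)]=1$. Now I use the hypothesis $|A|\ge q^3$. Fix a hyperplane $K$ of $A$; it is elementary abelian of rank at least $2$, hence noncyclic. For every $b\in K^{\#}$ we have $C_V(K)\le C_V(b)$, so $C_R(b)$ centralizes $C_V(K)$; and the generation principle for coprime actions of noncyclic abelian groups (quoted in the introduction) gives $R=\langle C_R(b):b\in K^{\#}\rangle$. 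Therefore $R$ centralizes $C_V(K)$. Applying the same generation principle to $A$ acting on $V$ yields $V=\langle C_V(K): K\ \text{a hyperplane of}\ A\rangle$, whence $R$ centralizes $V$, as required.

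Finally I would note why $q^3$ is the correct threshold: the argument breaks down for $|A|=q^2$ precisely because a hyperplane of $A$ is then cyclic, and the generation $R=\langle C_R(b):b\in K^{\#}\rangle$ fails for cyclic $K$ (Thompson's theorem being the only, and weaker, substitute). This matches the rank-$2$ counterexample recalled in the introduction.
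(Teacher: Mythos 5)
The paper does not prove this statement at all: Theorem~\ref{ward} is quoted as a known result of Ward \cite{Ward}, so there is no internal argument to compare against, and your proposal should be measured against the classical proof in the literature --- which, in substance, it reproduces correctly. Your soluble-case argument is complete and sound: the passage of the hypotheses to $A$-invariant sections uses coprimality correctly; the reduction to $F(G)=O_p(G)=V$ via embedding $G$ into the product of the two proper $A$-invariant quotients $G/O_{p_1}(G)\times G/O_{p_2}(G)$ is fine; the $A$-invariant complement $R$ exists by coprime Schur--Zassenhaus together with Glauberman's lemma; the factorization $C_M(b)=C_V(b)C_R(b)$ holds because $C_M(b)$ covers $C_{M/V}(b)$ and $r\mapsto rV$ is a $b$-equivariant isomorphism $R\to M/V$; and $[C_V(b),C_R(b)]=1$ because elements of coprime order commute in the nilpotent group $C_G(b)$. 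The double application of the generation principle --- first over $b\in K^{\#}$ for a fixed hyperplane $K$ (noncyclic, precisely because $A$ has rank at least $3$) to get $[C_V(K),R]=1$, then over all hyperplanes to get $[V,R]=1$ and the contradiction $R\le C_G(V)\le V$ --- is exactly the heart of the classical argument, and your closing remark correctly identifies why the threshold is $q^3$ and not $q^2$. The one genuinely external ingredient is solubility. Your sketch there is slightly imprecise --- for $a\in A^{\#}$ with a nontrivial orbit on the simple factors, the point is that $C_N(a)$ contains a \emph{diagonal} copy of a nonabelian simple group, contradicting nilpotency (not merely that each $C_{S_i}(a)$ is soluble) --- and the residual case (all factors $A$-invariant, the action on a factor faithful) needs the classification-based fact that a nonabelian finite simple group admits no noncyclic coprime abelian group of automorphisms; you are right that this is standardly quoted rather than reproved, and you flag it honestly. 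So: a correct proof modulo one clearly identified quotation, coinciding with the known argument rather than the (nonexistent) proof in the paper.
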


\begin{proof}[Proof of Proposition \ref{prop3}] Recall that $G$ is an $m$-generated group acted on by $A$ in such a way that for each $a\in A^{\#}$ every element of $C_{G}(a)$ is $n$-Engel in $C_{G}(a)$. We wish to prove that there exist positive integers $e,c$ depending only on $m,n$ and $q$ such that $e\gamma_{c}(L(G))=0$. By well-known Zorn's Theorem \cite[Satz III.6.3]{Huppert1}, each $C_G(a)$ is nilpotent. Therefore Theorem \ref{ward} implies that  $G$ is nilpotent. Hence, $G$ is a direct product of its Sylow subgroups and so, without loss generality, we can assume that $G$ is a $p$-group for some prime $p\neq q$.

Let $A_1\ldots,A_s$ be the distinct maximal subgroups of $A$ and note that $s$ is a $q$-bounded number. We denote by $\gamma_j=\gamma_j(G)$ the terms of lower central series of $G$. Set $L=L(G)$ and $L_j=\gamma_j/\gamma_{j+1}$ so that $L=\oplus_{j\geq 1} L_j$. The group $A$ naturally acts on $L$. Since each subgroup $A_i$ is noncyclic, we deduce that $L=\sum_{a\in A_i^{\#}}C_L(a)$ for every $i\leq s$.

Let $L_{ij}=C_{L_j}(A_i)$. Then for any $j$ we have
$$L_j=\sum_{1\leq i\leq s}L_{ij}.$$
Further, for any $l\in L_{ij}$ there exists $x\in\gamma_j\cap C_G(A_i)$ such that $l=x\gamma_{j+1}$. Since  $x$ is an $n$-Engel element in $C_{G}(a)$, the element $l$ is ad-nilpotent of index at most $n$ in $C_L(a)$. From the fact that $L=\sum_{a\in A_i^{\#}}C_L(a)$, we deduce now that
\begin{equation}\label{first3}
\mbox{any element}\   l\  \mbox{in}\ L_{ij}\ \mbox{is ad-nilpotent in}\ L\   \mbox{with index at most} \ n.
\end{equation}

Since $G$ is generated by $m$ elements, the additive group $L_{1}$ is generated by $m$ elements. In particular, the Lie ring $L$ is generated by at most $m$ ad-nilpotent elements, each from $L_{i1}$ for some $i$.

Let $\omega$ be a primitive $q$th root of unity and consider the tensor product $\overline{L}=L\otimes\mathbb{Z}[\omega]$. Set $\overline{L}_j=L_j\otimes\mathbb{Z}[\omega]$ for $j=1,2,\ldots$. We regard $\overline{L}$ as a Lie ring and so $\overline{L}=\left<   \overline{L}_1 \right>$. We also remark that there is a natural embedding of the ring $L$ into the ring $\overline{L}.$  Since the additive subgroup $L_1$ is generated by $m$ elements, it follows that the additive subgroup $\overline{L}_1$ is generated by at most $(q-1)m$ elements. 

The group $A$ naturally acts on $\overline{L}$  and $\overline{L}_{ij}=C_{\overline{L}_j}(A_i)$, where $\overline{L}_{ij}=L_{ij}\otimes\mathbb{Z}[\omega]$. We will now prove the following claim.
\begin{equation}\label{second3}
\mbox{Any element }y\in\overline{L}_{ij}\ \mbox{is ad-nilpotent in}\ \overline{L}\ \mbox{with}\ \{n,q\}\mbox{-bounded index.}
\end{equation} 
Indeed, choose $y\in\overline{L}_{ij}$ and write 
$$y=x_0+\omega x_1+\omega^2 x_2+\cdots+\omega^{q-2}x_{q-2},$$
for suitable $x_0,x_1,\ldots,x_{q-2}\in L_{ij}$. Note that by (\ref{first3}) each of the summands $\omega^t x_t$ is ad-nilpotent in $\overline{L}$ of index at most $n$.  Denote by $K=\langle x_0, \omega x_1,\ldots, \omega^{q-2}x_{q-2} \rangle$  the subring of $\overline{L}$ generated by the elements $x_0, \omega x_1,\ldots, \omega^{q-2}x_{q-2}$. 
Since  $K$ is contained in $C_{\overline{L}}(a)$ for some $a\in A_{i}$ and $C_{G}(a)$ is $n$-Engel, we conclude that $K$ satisfies  the linearized $n$-Engel identity. Furthermore, note that a commutator of weight $k$ in the elements $\omega^{t}x_{t}$ has the form $\omega^{r}x$ for some $x$ that belongs to $L_{ih}$, where $h=kj$. By (\ref{first3}) the element $x$ is ad-nilpotent of $\{n,q\}$-bounded index and so such commutator must be ad-nilpotent of bounded index as well. Hence, by Theorem \ref{Zelring}, $K$ is nilpotent of $\{n,q\}$-bounded class. Now Lemma \ref{lemmanovo} ensures that there exists a positive integer $u$, depending only on $n$ and $q$, such that $[\overline{L},_u K]=0$. Since  $y\in K$, we conclude that $y$ is ad-nilpotent in $\overline{L}$ with $\{n,q\}$-bounded index, as claimed.

An element $x\in \overline{L}$ will be called a common ``eigenvector'' for $A$ if for any $a\in A^{\#}$ there exists a number $k$ such that $x^a=\omega^kx$. Since $(|A|,|G|)=1$, the additive group of the Lie ring $\overline{L}$ is generated by common eigenvectors for $A$ (see, for example \cite[Lemma 4.1.1]{Khu1}).

As we have already remarked the  additive  subgroup $\overline{L}_1$ is generated by at most $(q-1)m$ elements. It  follows that the Lie ring $\overline{L}$ is generated by an $\{m,q\}$-bounded number of  common eigenvectors for $A$. Certainly, the Lie ring $\overline{L}$ is generated by an $A$-invariant set of $\{m,q\}$-boundedly many  common eigenvectors for $A$.

Since $A$ is noncyclic, every common eigenvector for $A$ is contained in the centralizer $C_{\overline{L}}(A_i)$ for some $i\leq s$. Further, any commutator in common eigenvectors is again a common eigenvector. Therefore if $l_1,l_2,\ldots\in\overline{L_{1}}$ are common eigenvectors for $A$ generating $\overline{L}$, then any commutator in these generators belongs to some $\overline{L}_{ij}$ and therefore, by (\ref{second3}), is ad-nilpotent of $\{n,q\}$-bounded index.

Note that, for any $1\leq i\leq s$, the subring $C_{\overline{L}}(A_i)$ satisfies the linearized $n$-Engel identity. Thus, by Theorem \ref{BazaRing}, there exist positive integers $e$ and $c$, depending only on $m, n$ and $q$, such that $e\gamma_c(\overline{L})=0$. Since  $L$ embeds into $\overline{L}$, we also have $e\gamma_c(L)=0$ and this concludes the proof.
\end{proof}

We will require the following quantitative version of Theorem \ref{ward} that was obtained in \cite{PS-2001}.
\begin{theorem}\label{ps}
Let $q$ be a prime and $A$ an elementary abelian group of order at least $q^3$. Suppose that $A$ acts coprimely on  a finite group $G$ and assume that $C_G(a)$ is nilpotent of class at most $c$ for each $a\in A^{\#}$. Then $G$ is nilpotent  of $\{q,c\}$-bounded class.
\end{theorem}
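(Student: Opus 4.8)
The final statement to prove is Theorem~\ref{ps}: under a coprime action of an elementary abelian group $A$ of order at least $q^3$ on a finite group $G$, if every $C_G(a)$ for $a\in A^{\#}$ is nilpotent of class at most $c$, then $G$ is nilpotent of $\{q,c\}$-bounded class. Since the non-quantitative version (Theorem~\ref{ward}) already guarantees that $G$ is nilpotent, the real content is to extract an explicit bound on the nilpotency class in terms of $q$ and $c$ alone. The plan is to reduce to the case of a $p$-group and then bound the nilpotency class of the associated Lie ring $L(G)$ by a $\{q,c\}$-bounded number, which in turn bounds the class of $G$ itself, since for a nilpotent group the class of $G$ equals the class of $L(G)$ formed from the lower central series.

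First I would reduce to a $p$-group. By Theorem~\ref{ward}, $G$ is nilpotent, hence the direct product of its Sylow subgroups; each Sylow subgroup is $A$-invariant (since the action is coprime) and inherits the centralizer hypothesis, so it suffices to bound the class of an $A$-invariant Sylow $p$-subgroup $P$ for $p\neq q$. Note that $C_P(a)$ is a subgroup of $C_G(a)$, hence nilpotent of class at most $c$. Working inside $P$, I would pass to the associated Lie ring $L=L(P)=\bigoplus_j L_j$, where $L_j=\gamma_j/\gamma_{j+1}$, on which $A$ acts naturally. Writing $A_1,\dots,A_s$ for the maximal (noncyclic) subgroups of $A$, the coprime decomposition gives $L=\sum_{a\in A_i^{\#}}C_L(a)$ for each $i$, and I would set $L_{ij}=C_{L_j}(A_i)$ so that $L_j=\sum_i L_{ij}$.

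The key point is that each $C_G(a)$ being nilpotent of class at most $c$ forces the fixed-point subring $C_L(a)$ to be nilpotent of class at most $c$ as well, because $C_L(a)$ is isomorphic to the Lie ring associated with $C_G(a)$ via the induced filtration, as recorded in Section~3. Consequently each $C_L(A_i)$, being contained in some $C_L(a)$, is nilpotent of class at most $c$, and in particular satisfies a polynomial identity (for instance the linearized $(c+1)$-Engel identity, or simply the identity expressing nilpotency of class $c$). To apply the machinery of Section~2 I would tensor with $\mathbb{Z}[\omega]$ for a primitive $q$th root of unity $\omega$, forming $\overline{L}=L\otimes\mathbb{Z}[\omega]$, and work with common eigenvectors for $A$; because $A$ is noncyclic, every common eigenvector lies in some $C_{\overline{L}}(A_i)$, and every commutator in common eigenvectors is again a common eigenvector. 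Here, in contrast to the Engel setting, the generators need not be ad-nilpotent of bounded index a priori, but this is supplied by the nilpotency of class $c$ of each $C_{\overline{L}}(A_i)$: any element of $C_{\overline{L}}(A_i)$ is ad-nilpotent of index at most $c$ within that subring, and a standard argument (using Lemma~\ref{lemmanovo} applied to the subring generated by the eigenvector together with its conjugates) upgrades this to ad-nilpotency of $\{q,c\}$-bounded index in the whole of $\overline{L}$.

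With these two ingredients — a polynomial identity satisfied by each $C_{\overline{L}}(A_i)$ and ad-nilpotency of $\{q,c\}$-bounded index for the generating common eigenvectors — I would invoke Theorem~\ref{BazaRing} (applied with $A$ acting on $\overline{L}$, whose fixed-point subring satisfies the identity) to obtain positive integers $e$ and $c'$, depending only on $q$ and $c$, with $e\gamma_{c'}(\overline{L})=0$. Since $L$ embeds in $\overline{L}$, this yields $e\gamma_{c'}(L)=0$, and because $L=L(P)$ arises from the lower central series of the finite $p$-group $P$ with $p\neq q$ coprime to $e$, the relation $e\gamma_{c'}(L)=0$ forces $\gamma_{c'}(L)=0$, i.e.\ $\gamma_{c'}(P)=\gamma_{c'+1}(P)$, whence $\gamma_{c'}(P)=1$ by nilpotency of $P$. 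Thus $P$ is nilpotent of $\{q,c\}$-bounded class $c'$, and since this bound is uniform over all primes $p$, the whole group $G$ is nilpotent of $\{q,c\}$-bounded class. The main obstacle I anticipate is the passage from ad-nilpotency of bounded index \emph{within} each fixed-point subring $C_{\overline{L}}(A_i)$ to ad-nilpotency of bounded index in the ambient ring $\overline{L}$; this is exactly the role of Lemma~\ref{lemmanovo}, and the care required is in checking that every commutator in the chosen generating eigenvectors again lands in some $\overline{L}_{ij}$ so that the index bound is genuinely $\{q,c\}$-bounded and independent of $p$.
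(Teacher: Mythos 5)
You should first note that the paper does not prove Theorem~\ref{ps} at all: it is quoted verbatim from \cite{PS-2001}, so there is no in-paper proof to match, and the question is whether your blind attempt actually works. It does not, and the central gap is one of generation. Every quantitative tool you invoke from Section~2 --- Theorem~\ref{BazaRing} and Corollary~\ref{ZelBazaField} --- produces constants that depend on the number $m$ of generators of the Lie ring, and you never bound $m$: for an arbitrary finite $p$-group $P$ the component $\overline{L}_1=(P/\gamma_2(P))\otimes\mathbb{Z}[\omega]$ requires unboundedly many generators, so the $e$ and $c'$ you extract depend on $m$ and are not $\{q,c\}$-bounded. The reduction that the paper uses to repair exactly this in the proof of Theorem A3 --- replace $G$ by $\langle x^A,y^A\rangle$, which is $q$-boundedly generated --- is available there because the $k$-Engel condition is a two-variable identity; bounding the nilpotency class is a $(c'+1)$-variable condition in which $c'$ is the very quantity being sought, so the analogous reduction is circular (an $m$-generated version with bound $f(m,q,c)$ does not yield any uniform bound). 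This is precisely why the quantitative statement cannot be obtained by simply rerunning the Proposition~\ref{prop3} machinery and is instead imported from \cite{PS-2001}, where a different argument is given.

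Two further points. First, your upgrade of ad-nilpotency via Lemma~\ref{lemmanovo} ``applied to the subring generated by the eigenvector together with its conjugates'' is circular as stated: that lemma \emph{assumes} the commutators in the generators are ad-nilpotent in the ambient ring, which is exactly what you are trying to establish. This particular gap is repairable, and more simply than you suggest: for $l\in C_{\overline{L}}(A_i)$ and each $a\in A_i^{\#}$ one has $l\in C_{\overline{L}}(a)$, and $C_{\overline{L}}(a)=C_L(a)\otimes\mathbb{Z}[\omega]$ is nilpotent of class at most $c$ (class-$c$ nilpotency is a multilinear identity, so it survives the tensoring), whence $[C_{\overline{L}}(a),{}_c\,l]=0$; since $A_i$ is noncyclic of order $q^2$ --- this is where $|A|\geq q^3$ genuinely enters --- we have $\overline{L}=\sum_{a\in A_i^{\#}}C_{\overline{L}}(a)$, and summing gives $[\overline{L},{}_c\,l]=0$ directly, with index $c$ independent of everything else. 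Second, your final step assumes $p$ is coprime to $e$, but nothing guarantees this: $e$ is a fixed integer while $p$ ranges over all prime divisors of $|G|$, and for the finitely many primes dividing $e$ a separate argument is required. In the paper's proof of Theorem A3 that case is handled through $L_p(G)$, Theorem~\ref{finiteL}, powerful subgroups and rank bounds, and that route terminates in an appeal to Theorem~\ref{ps} itself --- so it cannot be borrowed here without circularity.
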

It was recently shown in \cite{meshu} that the assumption that $A$ is abelian is actually superfluous in the above result. That is, the result holds for any group $A$ of prime exponent.
We are now ready to establish  Theorem A3.
\begin{proof}[Proof of Theorem A3] By the hypothesis, for each $a\in A^{\#}$, the centralizer $C_{G}(a)$ is $n$-Engel. Thus the Zorn theorem tells us that each $C_G(a)$ is nilpotent. We deduce from Theorem \ref{ward} that the  group $G$ is nilpotent. 

Choose  now arbitrary elements  $x,y\in G$. It is sufficient to show that the subgroup $\left<x,y\right>$ is nilpotent of $\{n,q\}$-bounded class. 
Without loss of generality we can assume that no proper $A$-invariant subgroup of $G$ contains both $x$ and $y$, and so $G=\left<x^A,y^A\right>$. Therefore $G$ can be generated by a $q$-bounded number of elements. Since $G$ is nilpotent, without loss of generality we can also assume that $G$ is a $p$-group, for some  prime $p\neq q$. 

Let $L=L(G)$. By Proposition \ref{prop3}(1) there exist positive integers $e$ and $c$ depending only on $n$ and $q$ such that $e\gamma_c(L)=0$. If $p$ is not a divisor of $e$, then $\gamma_c(L)=0$. In particular $G$ is nilpotent of class at most $c-1$, as required.  

Let us assume now that $p$ is a divisor of $e$. By Proposition \ref{prop3}(2), the Lie algebra $L_p(G)$ is nilpotent of $\{n,q\}$-bounded class. Then  Theorem \ref{finiteL} tells us that $G$ has a powerful characteristic subgroup $P$ of $\{n,q\}$-bounded index. It follows that $P$ has a bounded number of generators (see \cite[6.1.8(ii), p.\ 164]{Rob}). Therefore the rank of $P$ is $\{n,q\}$-bounded and so the rank of $G$ is $\{n,q\}$-bounded as well. This implies that any centralizer $C_G(a)$ can be generated by an $\{n,q\}$-bounded number of elements for each $a\in A^{\#}$. Zelmanov noted in \cite{Z2} that the nilpotency class of a finite $n$-Engel group is bounded in terms of $n$ and the number of generators of that group. (This means that locally nilpotent $n$-Engel groups form a variety. More results of this nature can be found in \cite{stt}). We conclude that each $C_{G}(a)$ is nilpotent with $\{n,q\}$-bounded class.  Theorem \ref{ps} implies that $G$ is nilpotent of $\{n,q\}$-bounded class. The proof is now complete. 
\end{proof}

\section{Proof of Theorem B2}
We start this section by proving Theorem B2 in the particular case where $G$ is a pro-$p$ group.

\begin{proposition}\label{le}
Let $G$ be a pro-$p$ group satisfying the hypothesis of Theorem B2. Then $G$ is locally nilpotent.
\end{proposition}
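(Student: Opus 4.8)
The plan is to run the associated-Lie-algebra machinery developed in the previous sections, the new feature being that the centralizers are now genuine Engel groups. First observe that coprimality forces $p\neq q$, and that for each $a\in A^{\#}$ every element of $C_{G}(a)$ is Engel in $C_{G}(a)$; hence by the theorem of Wilson and Zelmanov \cite[Theorem 5]{WZ} each $C_{G}(a)$ is locally nilpotent. Since local nilpotency is a property of finitely generated subgroups, it suffices to prove that an arbitrary topologically finitely generated closed $A$-invariant subgroup of $G$ is nilpotent; replacing $G$ by such a subgroup, I may assume that $G$ is finitely generated as a pro-$p$ group.

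Next I would pass to $L=L_{p}(G)$, the Lie algebra over $\mathbb{F}_{p}$ associated with the Zassenhaus--Jennings--Lazard series of $G$, which is characteristic and hence $A$-invariant. Then $A$ acts on $L$, the fixed-point subalgebra $C_{L}(a)$ is the Lie algebra associated with $C_{G}(a)$, and because every element of $C_{G}(a)$ is Engel in $G$, Lemma \ref{leWZ} shows that the image in $L$ of each such element is ad-nilpotent in the whole of $L$. As $G$ is finitely generated, $G_{1}/G_{2}$ is finite-dimensional over $\mathbb{F}_p$, so $L$ is generated by finitely many elements. Extending scalars to a field $K\supseteq\mathbb{F}_{p}$ containing a primitive $q$th root of unity $\omega$ and setting $\overline{L}=L\otimes_{\mathbb{F}_{p}}K$, the additive group of $\overline{L}$ is spanned by common eigenvectors for $A$; since $A$ is noncyclic, each such eigenvector lies in $C_{\overline{L}}(A_{i})$ for some maximal subgroup $A_{i}$ of $A$, hence in $C_{\overline{L}}(a)$ for any $a\in A_{i}^{\#}$, and is therefore ad-nilpotent in $\overline{L}$; moreover any commutator of eigenvectors is again an eigenvector and so is ad-nilpotent as well. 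Thus $\overline{L}$ is generated by finitely many ad-nilpotent elements all of whose commutators are ad-nilpotent.

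To invoke Zelmanov's Theorem \ref{Z1992} it remains to equip $\overline{L}$ with a polynomial identity, and by Theorem \ref{blz} this follows once we know that $C_{\overline{L}}(A)$ satisfies one. Here lies the main obstacle: unlike in the proof of Theorem A3, the Engel hypothesis is non-uniform, so the fixed points carry no a priori linearized Engel identity and an identity cannot simply be read off. The way I would overcome this is to prove that the fixed-point subgroup $C_{G}(A)$ is not merely locally nilpotent but genuinely nilpotent; then $C_{L}(A)$, being the Lie algebra associated with a nilpotent group, is nilpotent and in particular satisfies a polynomial identity. Establishing nilpotency of $C_{G}(A)$ is the delicate step, since a priori a centralizer in a finitely generated pro-$p$ group need not itself be finitely generated; I expect it to be forced out of the combination of the facts that $C_{G}(A)$ is locally nilpotent and that its image in $L$ consists of globally ad-nilpotent elements.

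Granting the identity on $C_{\overline{L}}(A)$, Theorem \ref{blz} yields a polynomial identity for $\overline{L}$, and then Theorem \ref{Z1992} shows that $\overline{L}$, and hence $L=L_{p}(G)$, is nilpotent. To transfer this back to the group I would apply Theorem \ref{finiteL} to the finite quotients of $G$, on each of which $L_{p}$ is nilpotent of the same bounded class; since $G$ is uniformly generated, this produces a powerful subgroup of bounded index in every quotient and hence shows that $G$ has finite rank. Each $C_{G}(a)$ is then a finitely generated locally nilpotent pro-$p$ group and so is nilpotent, and a final argument exploiting the finite rank of $G$ together with the nilpotency of the centralizers yields that $G$ itself is nilpotent. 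The genuine difficulty is concentrated in the nilpotency of $C_{G}(A)$ feeding the polynomial identity; the remaining steps are the by-now-standard Lie-theoretic transfer.
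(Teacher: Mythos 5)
Your overall skeleton (reduce to $G$ finitely generated, pass to $L_p(G)$, extend scalars, apply Zelmanov) matches the paper, but three of your steps have genuine gaps, two of them fatal. First, the ad-nilpotency of common eigenvectors does not follow the way you assert it. Lemma \ref{leWZ} gives ad-nilpotency only for images of actual group elements, i.e.\ for elements of $L_{ij}=C_{L_j}(A_i)$; a common eigenvector in $\overline{L}_{ij}=L_{ij}\otimes\mathbb{F}_p[\omega]$ is an $\mathbb{F}_p[\omega]$-linear combination of such images, and a sum of ad-nilpotent elements need not be ad-nilpotent. Merely lying in $C_{\overline{L}}(a)$ confers nothing. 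The paper proves this step by taking the subalgebra $J$ generated by the $q-1$ summands, showing $J$ is nilpotent via Theorem \ref{Z1992} (which already requires a polynomial identity on $J$), and then invoking Lemma \ref{lemmanovo}. So the polynomial identity is needed \emph{before} this step, not after it, and your ordering is circular. Second, your route to the polynomial identity --- proving that $C_G(A)$ is genuinely nilpotent --- is left entirely unproved (``I expect it to be forced out of\ldots''), and it is not clear it can be carried out: local nilpotency of $C_G(A)$ plus ad-nilpotency of its image in $L$ does not yield nilpotency without essentially the whole machinery you are trying to build. The paper sidesteps this by quoting \cite[Proposition 2.6]{PS-CA3}, which produces a multilinear polynomial identity for $L$ directly (via a Baire-category argument on the Engel condition combined with Theorem \ref{blz}).

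Third, and most seriously, your endgame is wrong. After obtaining finite rank you propose to conclude from the nilpotency of the centralizers $C_G(a)$ that $G$ is nilpotent. Theorem B2 only assumes $|A|\geq q^2$, and for a group of order exactly $q^2$ this implication is false: the introduction cites a finite non-nilpotent group with a four-group of automorphisms all of whose fixed-point subgroups are abelian. The paper's conclusion instead uses Lazard's criterion to make $G$ linear over $\mathbb{Q}_p$ and then Gruenberg's theorem that in a linear group the Engel elements form the Hirsch--Plotkin radical; since $G=\prod_{a\in A^{\#}}C_G(a)$ and every element of every $C_G(a)$ is Engel \emph{in $G$}, the finitely generated group $G$ is nilpotent. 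This is exactly the point where the hypothesis that fixed points are Engel in the whole group (rather than in their own centralizer) must be used, and your version discards it.
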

\begin{proof}

Since every finite set of $G$ is contained in a finitely generated $A$-invariant closed subgroup, we may assume that $G$ is finitely generated.  Then, of course, it will be sufficient to show that $G$ is nilpotent.

We denote by $D_j=D_j(G)$ the terms of the $p$-dimension central series of $G$. Let $L=L_p(G)$ be the Lie algebra associated with the group $G$ and 
$L_j=L\cap(D_j/D_{j+1})$. Thus, $L=\oplus_{j\geq 1}L_j$. 

The group $A$ naturally acts on $L$. Let $A_1,\ldots,A_{q+1}$ be the distinct maximal subgroups of $A$. Set $L_{ij}=C_{L_j}(A_i)$. We know that any $A$-invariant subgroup is generated by the centralizers of $A_i$. Therefore for any $j$ we have $$L_j=\sum_{i=1}^{q+1}L_{ij}.$$
Further, for any $l\in L_{ij}$ there exists an element $x\in D_j\cap C_G(A_i)$ such that $l=xD_{j+1}$. Since $x$ is Engel in $G$, it follows by Lemma \ref{leWZ} that $l$ is ad-nilpotent in $L$. Thus,

\begin{equation}\label{le1}
\mbox{any element in}\  L_{ij}  \ \mbox{is ad-nilpotent in}\  L.
\end{equation}

Let $\omega$ be a primitive $q$th root of unity and $\overline{L}=L\otimes\mathbb{F}_p[\omega]$. Here $\mathbb{F}_p$ stands for the field with $p$ elements. We can view $\overline{L}$ both as a Lie algebra over $\mathbb{F}_p$ and as that over $\mathbb{F}_p[\omega]$. It is natural to identity $L$ with the subalgebra $L\otimes 1$ of $\overline{L}$. We note that if an element $x\in L$ is ad-nilpotent of index $r$, say, then the correspondent element $x\otimes1$ is ad-nilpotent in $\overline{L}$ of the same index $r.$ 

Put $\overline{L}_j=L_j\otimes\mathbb{F}_p[\omega]$. Then $\overline{L}=\left<\overline{L}_1\right>$ and $\overline{L}$ is the direct sum of the homogeneous components $\overline{L}_j$. The group $A$ naturally acts on $\overline{L}$ and we have $\overline{L}_{ij}=C_{\overline{L}_j}(A_i)$, where $\overline{L}_{ij}=L_{ij}\otimes\mathbb{F}_p[\omega]$. Let us show that
\begin{equation}\label{le2}
\mbox{any element}\ y\in\overline{L}_{ij}\ \mbox{is ad-nilpotent in}\ \overline{L}.
\end{equation}
Since $\overline{L}_{ij}=L_{ij}\otimes\mathbb{F}_p[\omega]$, we can write
$$y=x_0+\omega x_1+\omega^2 x_2\cdots+\omega^{q-2}x_{q-2}$$
for suitable $x_0, x_1, x_2,\ldots, x_{q-2}\in L_{ij}$. In view of (\ref{le1}) it is easy to see that each of the summands $\omega^sx_s$ is ad-nilpotent in $\overline{L}$. Let $J=\left<x_0, \omega x_1, \omega^2 x_2,\ldots, \omega^{q-2}x_{q-2}\right>$ be the subalgebra of $\overline{L}$ generated by $x_0,$ $ \omega x_1,\omega^2 x_2,\ldots, \omega^{q-2}x_{q-2}$. We wish to show that $J$ is nilpotent. 

Note that $J\subseteq C_{\overline{L}}(A_i)$. A commutator of weight $t$ in the generators of $J$ has form $\omega^sx$ for some $x$ that belongs to $L_{im}$, where $m=tj$. By (\ref{le1}) the element $x$ is ad-nilpotent and so such a commutator must be ad-nilpotent. By \cite[Proposition 2.6]{PS-CA3} $L$ satisfies a multilinear polynomial identity. The multilinear identity is also satisfied in $\overline{L}$ and so it is satisfied in $J$, since $J\subseteq C_{\overline{L}}(A_i)$. Hence by Theorem \ref{Z1992} $J$ is nilpotent. By Lemma \ref{lemmanovo} (or a Lie algebra version \cite[Lemma 5]{KS}) there exists some positive integer $d$ such that $[\overline{L}, \underbrace{J,\ldots,J}_d ]=0$. This proves (\ref{le2}).

Since $A$ is abelian and the ground field is now a splitting field for $A$, every component $\overline{L}_j$ decomposes in the direct sum of common eigenspaces for $A$. In particular,  $\overline{L}_1$ is spanned by finitely many common eigenvectors for $A$, since $G$ is a finitely generated pro-$p$ group. Hence, $\overline{L}$ is generated by finitely many common eigenvectors for $A$ from $\overline{L}_1$. Since $A$ is noncyclic every common eigenvector is contained in the centralizer $C_{\overline{L}}(A_i)$ for some $i\leq q+1$.

We also note that any commutator in common eigenvectors is again a common eigenvector for $A$. Therefore, if $l_1, l_{2},\ldots\in \overline{L}_1$ are common eigenvectors for $A$ generating $\overline{L}$ then any commutator in those generators belongs to some $\overline{L}_{ij}$ and therefore, by (\ref{le2}), is ad-nilpotent.

As we have mentioned earlier, $\overline{L}$ satisfies a polynomial identity. It follows from Theorem \ref{Z1992} that $\overline{L}$ is nilpotent. Since $L$ embeds into $\overline{L}$, we  deduce that $L$ is nilpotent as well. 

According to Lazard \cite{L} the nilpotency of $L$ is equivalent to $G$ being $p$-adic analytic. By \cite[7.19 Theorem]{GA} $G$ admits a faithful linear representation over the field of $p$-adic numbers. A result of Gruenberg \cite[Theorem 0]{G} says that in a linear group the Hirsch-Plotkin radical coincides with the set of Engel elements. Since $G$ is finitely generated and $$G=\prod\limits_{a\in A^{\#}}C_G(a),$$ we deduce that $G$ is nilpotent. This concludes the proof.
\end{proof}

Now we are ready to deal with the general case of Theorem B2.
\begin{proof}[Proof of Theorem B2]
Let $\mathcal{S}$ be the subset of all $A$-invariant open normal subgroups of $G$. By \cite[Theorem 1.2.5]{W}, we have $G\cong\underleftarrow{\lim}_{N\in \mathcal{S}}\,G/N$. Note that if $N\in\mathcal{S}$ and $Q=G/N$, the group $A$ acts coprimely on $Q$ in the natural way and $Q$ is generated by the centralizers $C_Q(a)=C_G(a)N/N$. By well-known Baer's Theorem \cite[Hilfssatz III.6.14]{Huppert1}, a finite group generated by Engel elements is nilpotent and so $Q$ is nilpotent. Since this argument holds for any $N$ in $\mathcal{S}$, we conclude that $G$ is pronilpotent and therefore $G$ is the Cartesian product of its Sylow subgroups. 

Let $a\in A^{\#}$. For each positive integer $i$ define the set
$$S_i=\{(x,y)\in G\times C_G(a): [x, _iy]=1\}.$$
Since these sets are closed in $G\times C_G(a)$ and their union coincides with  $G\times C_G(a)$, by Baire category theorem \cite[p. 200]{Baire} at least one $S_{i}$ has a non-empty interior. Therefore, we can find an open normal subgroup $H$ in $G$, elements $u\in G, v\in C_G(a)$ and a positive integer $n$ such that $[ul,_nvk]=1$ for any $l\in H$ and any $k\in H\cap C_G(a)$. Let $[G:H]=m$ and let  $\pi_1=\pi(m)$ be the set of primes dividing $m$. Denote  $O_{\pi'_1}(G)$ by $T$ and observe that for all $x\in C_T(a)$ we have $[T,_nx]=1$.

Of course, the open subgroup $H$, the set $\pi_1$ and the integer $n$ depend  on the choice of $a\in A^{\#}$ so strictly speaking they should be denoted by $H_a$, $\pi_a$ and $n_a$, respectively. We choose such $H_a$, $\pi_a$ and $n_a$ for every $a\in A^{\#}$. Set $\pi=\cup_{a\in A^{\#}}\pi_a$, $n=max\{n_a:a\in A^{\#}\}$ and $R=O_{\pi'}(G).$ Note that $\pi$ is a finite set. The choice of the set $\pi$ guarantees that for each $a\in A^{\#}$ every element of $C_R(a)$ is $n$-Engel in $R$. Using the routine inverse limit argument we deduce from Theorem A2 that $R$ is $k$-Engel for some suitable integer $k$. Thus, by \cite[Theorem 5]{WZ}, the subgroup $R$ is locally nilpotent. Let $p_1,\ldots, p_r$ be the finitely many primes in $\pi$ and $P_1,\ldots, P_r$ be the corresponding Sylow subgroups of $G$. Then $G=P_1\times\ldots\times P_r\times R$ and therefore it is sufficient to show that each subgroup $P_i$ is locally nilpotent. This is immediate from Proposition \ref{le}. The proof is complete.
\end{proof}



\begin{thebibliography}{99}
  
\bibitem{PS-CA3} C. Acciarri, P. Shumyatsky,\  Profinite groups
 and the fixed points of coprime automorphisms, {\it  J.  Algebra}, \textbf{452} (2016), 188--195.
 
 
\vspace{0.1cm}

\bibitem{BZ} 
Y.\,A. Bahturin, M. V. Zaicev, \  Identities of graded algebras,  {\it J.  Algebra}, \textbf{205} (1998),  1--12. 

\vspace{0.1cm}

\bibitem{GA} 
J.\,D. Dixon, M.\,P.\,F. du Sautoy, A. Mann, D. Segal,  {\it Analytic Pro-p Groups},  Cambridge University Press, Cambridge, 1991.  

\vspace{0.1cm}

\bibitem{G}
K.\,W. Gruenberg, The Engel structure of linear groups, \ {\it J. Algebra}, {\bf 3} (1966), 291--€"303


\vspace{0.1cm}

\bibitem{Huppert1}
B. Huppert, \ {\it Endliche Gruppen I}, Springer-Verlag,1967.

\vspace{0.1cm}

\bibitem{Huppert2} 
B. Huppert, N. Blackburn, {\it Finite Groups II}, Springer-Verlag, Berlin, 1982.


\vspace{0.1cm}

\bibitem{Baire} J.\,L. Kelly, {\it General Topology}, Van Nostrand, Toronto, New York, London, 1955.


\vspace{0.1cm}

\bibitem{Khu1} E.\,I. Khukhro, {\it Nilpotent Groups and their Automorphisms}, de Gruyter--Verlag, Berlin, 1993.


\vspace{0.1cm}

\bibitem{KS}
 E.\,I. Khukhro, P.  Shumyatsky,  Bounding the exponent of a finite group with automorphisms,  {\it J. Algebra}, \textbf {212}  (1999),  363--374. 

\vspace{0.1cm}


\bibitem{L} 
M. Lazard,  Groupes Analytiques $p$-Adiques,  {\it Publ. Math. Inst. Hautes \'Etudes Sci.}, \textbf{26}, Paris,  1965,  389--603. 

\vspace{0.1cm}

\bibitem{l} 
V. Linchenko,  Identities of Lie algebras with actions of Hopf algebras,  {\it Comm. Algebra}, \textbf {25}(1997),  3179--3187. 

\vspace{0.1cm}

\bibitem{luma}
 A. Lubotzky, A. Mann,  Powerful $p$-groups. I, II,  {\it J. Algebra}, \textbf {105} (1987),  484--515. 

\vspace{0.1cm}

\bibitem{meshu} E. de Melo, P. Shumyatsky, Finite groups and their coprime automorphisms, {\it Proc. AMS}, To appear.
\vspace{0.1cm}

\bibitem{Rob}D.\,J.\,S. Robinson, \textit{A Course in the Theory of Groups}, Springer-Verlag, New York (1996).


\bibitem{PSprofinite} P. Shumyatsky, Centralizers in groups with finiteness conditions, {\it J. Group Theory}, {\bf 1} (1998), 275--282.
    
\vspace{0.1cm}

\bibitem{aaaa}
 P. Shumyatsky,   Applications of Lie ring methods to group theory, in {\it Nonassociative Algebra and Its Applications}, (Eds R. Costa et al.), Marcel Dekker, New York, (2000),  373--395. 


\vspace{0.1cm}

\bibitem{PS-2001} P. Shumyatsky, Finite groups and the fixed points of coprime automorphisms, {\it Proc. Am. Math. Soc.}, \textbf{129} (2001), 3479--3484.
\vspace{0.1cm}

\bibitem{eu}P. Shumyatsky, Coprime automorphisms of profinite groups, {\it Quart. J. Math.}, \textbf{53} (2002), 371--376.

\vspace{0.1cm}

\bibitem{shusa} P. Shumyatsky, D.\,S. Silveira,  On finite groups with automorphisms whose fixed points are Engel, {\it Arch. Math.}, \textbf{106} (2016), 209--218.

\vspace{0.1cm}

\bibitem{stt} P. Shumyatsky, A. Tortora, M. Tota,  On varieties of groups satisfying an Engel type identity, {\it J. Algebra}, \textbf{447} (2016), 479--489.

\vspace{0.1cm}

\bibitem{Ward} J.\,N. Ward, On finite groups admitting automorphisms with nilpotent fixed-point group, {\it Bull. Aust. Math. Soc.}, \textbf{5} (1971), 281--282.

\vspace{0.1cm}

\bibitem{W} J.\,S. Wilson, {\it Profinite Groups}, Clarendon Press, Oxford, 1998.
\vspace{0.1cm}

\bibitem{W2}J.\,S. Wilson, On the structure of compact torsion groups, \textit{Monatsh. Math.}, \textbf{96} (1983), 57-66

\vspace{0.1cm}

\bibitem{WZ} J.\,S. Wilson, E.\,I. Zelmanov, Identities for lie algebras of pro-$p$ groups, {\it Journal of Pure and Applied Algebra}, \textbf{81} (1992), 103 --109.

\vspace{0.1cm}

\bibitem{Z0} 
E.\,I. Zelmanov, Lie methods in the theory of nilpotent groups,  in {\it Groups '93 Galaway/ St Andrews}, Cambridge University Press, Cambridge,  (1995), 567--585.
\vspace{0.1cm}

\bibitem{Z1} 
E.\,I. Zelmanov,  {\it Nil Rings and Periodic Groups},  The Korean Math. Soc. Lecture Notes in Math., Seoul, 1992.

\vspace{0.1cm}
\bibitem{zelmanov}E.\,I. Zelmanov, On periodic compact groups, {\it Israel J. Math.}, {\bf 77} (1992), 83--95.

\vspace{0.1cm}
\bibitem{Z2} E.\,I. Zelmanov, On some problems of group theory and Lie algebras, \textit{Math. USSR Sb.} {\bf 66}(1990), 159 --168.
\vspace{0.1cm}

\bibitem{zenew}E.\,I. Zelmanov, Lie algebras and torsion groups with identity, arXiv: 1604.05678[math.RA].


\end{thebibliography}
\end{document}